\newcommand\blfootnote[1]{%
  \begingroup
  \renewcommand\thefootnote{}\footnote{#1}%
  \addtocounter{footnote}{-1}%
  \endgroup
}
\newcommand{\Div}{\divergence}
\newcommand{\ep}{\varepsilon}
\newcommand{\R}{\mathbb R}
\newcommand{\N}{\mathbb N}
\newcommand{\E}{\mathbb E}
\newcommand{\p}{\mathbb P}
\newcommand{\F}{\mathfrak F}
\newcommand{\dd}{\mathrm{d}}
\newcommand{\dx}{\,\mathrm{d}x}
\newcommand{\dt}{\,\mathrm{d}t}
\newcommand{\dxt}{\,\mathrm{d}x\,\mathrm{d}t}
\newcommand{\ds}{\,\mathrm{d}\sigma}
\newcommand{\dxs}{\,\mathrm{d}x\,\mathrm{d}\sigma}
\newcommand{\UU}{\mathfrak{U}}
\newcommand{\V}{\mathcal{V}}
\newcommand{\eps}{\varepsilon}
\newcommand{\dif}{\mathrm{d}}
\newcommand{\mf}{\mathfrak{F}}
\newcommand{\prst}{\mathbb{P}}
\newcommand{\mt}{\mathbb T^3}
\newcommand{\bxi}{\bfxi}
\newcommand{\T}{\mathbb T^3}
\newcommand{\StoB}{\left(\Omega, \mathfrak{F},\left(\mathfrak{F}_t \right)_{t \geq 0},  \mathbb{P}\right)}
\newcommand{\db}[1]{\textcolor[rgb]{0.00,0.00,0.00}{  #1}}
\numberwithin{equation}{section}
\numberwithin{theorem}{section}
\begin{document}
\begin{center}
		\Large Dissipative solutions to the stochastic Euler equations\\[4ex]
		\normalsize D. Breit and T. C. Moyo
\end{center}
  \begin{abstract}
We study the three-dimensional incompressible Euler equations subject to stochastic forcing.
We develop a concept of dissipative martingale solutions, where the nonlinear terms are described by generalised Young measures.
We construct these solutions as the vanishing viscosity limit
of solutions to the corresponding stochastic Navier--Stokes equations. This requires a refined stochastic compactness method incorporating the generalised Young measures.\\
 Our solutions satisfy a form of the energy inequality which gives rise to a weak-strong uniqueness result (pathwise and in law). A dissipative martingale solution coincides (pathwise or in law) with the strong solution as soon as the latter exists.
  \end{abstract}

\blfootnote{  \vspace{-3ex}
\par\noindent {\it Adress:
Heriot-Watt University, Department of Mathematics,
EH14 4AS Riccarton, Edinburgh, UK}
\par\noindent {\it Mathematics Subject
Classification: 60H15, 35R60, 76B03, 35Q31.}
\par\noindent {\it Keywords:  Stochastic Euler equations, weak-strong uniqueness, martingale solutions, vanishing viscosity.} 
}

\section{Introduction}
We are interested in the stochastic Euler equations describing the motion
of an incompressible inviscid fluid in the three-dimensional torus $\mt$. The flow is described by the velocity field $\bfu:Q\rightarrow\R^3$, $Q=(0,T)\times\mt$, and the pressure $\pi:Q\rightarrow \R$ and the equations in question read as
\begin{align}\label{eq:Eulera}
\left\{\begin{array}{rc}
\dd\bfu=-(\nabla\bfu)\bfu\dt-\nabla\pi\dt+\Phi\dd W
& \mbox{in $Q$,}\\
\Div \bfu=0\qquad\qquad\qquad& \mbox{in $Q$,}
%\bfu(0)=\bfu_0\qquad\qquad\qquad& \mbox{in $\mt$,}
\end{array}\right.
\end{align}
subject to periodic boundary conditions for $\bfu$.
The first equation in \eqref{eq:Eulera} is forced by a cylindrical Wiener process $W$ and $\Phi$ is  a Hilbert--Schmidt operator, see Section \ref{subsec:prob} for details. Stochastic forces in the equations of motion are frequently used to model phenomena in turbulent flows at high Reynolds number, see e.g. \cite{Ei,No,Vi}.\\
As in the deterministic case smooth solutions to \eqref{eq:Eulera} are only known to exist locally in time, see \cite{GHVic,Kim,MiVa}. The life space of these solutions is an a.s. positive stopping time.
While better results are known in the two-dimensional situation, cf. \cite{BeFl,BrPe,CaCu,Kim2}, the existence and uniqueness of global strong solutions is a major open problem. In the deterministic case a series of counter examples concerning uniqueness of solutions to the Euler equations have been accomplished recently. These solutions are called wild solutions and are constructed by the method of convex integration pioneered by the work of De Lellis and Sz\'ekelyhidi \cite{DelSze2,DelSze3}.
As shown in \cite{BFHconvex} stochastic forces do not seem to change the situation.\\
In view of these examples one may expect that singularities occur in the long-run and that solutions are not unique. A natural approach to deal with such situations is the concept of measure-valued solutions as introduced by Di Perna and Majda \cite{DiMa} (see also \cite{Di}). These solutions are constructed by compactness methods and the nonlinearities are described by
generalised Young measures. A generalised Young measure is a triplet $\mathcal V=(\nu_{t,x},\nu_{t,x}^\infty,\lambda)$ consisting of the oscillation measure $\nu_{t,x}$ (a parametrised probability measure), the concentration measure $\lambda$ (a non-negative Radon measure) and
the concentration angle $\nu_{t,x}^\infty$ (a parametrised probability measure on the unit sphere). The convective term can be written as the space-time distribution
$$\Div\big\langle\nu_{t,x},\bfxi\otimes\bfxi\big\rangle\dxt+\Div\big\langle\nu_{t,x}^\infty,\bfxi\otimes\bfxi\big\rangle\,\dif \lambda.$$
This is the only available framework which allows us to obtain (for any given initial datum) the long-time existence of solutions, which comply with basic physical principles such as the dissipation of energy (the existence of weak solutions for any initial datum, which violate the energy inequality, has been shown in \cite{W}). The energy inequality implies a weak-strong uniqueness principle for measure-valued solutions as shown in \cite{BrDeSz}: A measure-valued solution coincides with the strong solution as soon as the strong solution exists.\\
\db{While all these results concern the deterministic case, there is strong interests to study measure-valued solutions to the three-dimensional stochastic Euler equations \eqref{eq:Eulera} in order to grasp its long-term dynamics. The first result is the existence of martingale solutions in
\cite{Kim3}, where the equations of motion are understood in the measure-valued sense. These solutions are weak in the probabilistic sense, that is the underlying probability space as well as the driving Wiener process are not a priori given but become an integral part of the solution. Such a concept is common for stochastic evolutionary problems when uniqueness is not available. It is classical for finite dimensional problems and has also been applied to various stochastic partial differential equations, in particular in fluid mechanics (see, for instance, \cite{BrHo,BrM,CaCu1,debussche1,FlGa}).
Unfortunately, the solutions constructed in \cite{Kim3} do only satisfy a form of energy estimate in expectation with an unspecified constant $C$ on the right-hand side, rather than an energy inequality as in the deterministic case. This is not enough
to conclude with a weak-strong uniqueness principle which one should require for any reasonable notion of generalized solution, cf. \cite{Li1}.\\
%We approximate \eqref{eq:Euler} by a sequence of Navier--Stokes equations with vanishing viscosity and use a refined stochastic compactness method. It is based on Jakubowski's extension
%of Skorokhod's representation theorem \cite{jakubow} in order to incorporate the generalised Young measures.\\
The aim of this paper is to close this gap and to develop a concept of measure-valued martingale solutions to \eqref{eq:Eulera} which satisfy a suitable energy inequality.
 The solution are called \emph{dissipative} and our energy inequality can be described as follows:
 If $\mathcal V=(\nu_{t,x},\nu_{t,x}^\infty,\lambda)$ is the generalised Young measure associated to the solution, then the kinetic energy
$$E_t=\frac{1}{2}\int_ {\mt}\big\langle\nu_{t,x},|\bfxi|^2\big\rangle\dx+\frac{1}{2}\lambda_t(\mt),\quad \lambda=\lambda_t\otimes \mathcal L^1,$$
satisfies
\begin{align*} 
 E_{t^+}
& \leq E_{s^-}+\frac{1}{2} \int_s^t \|\Phi\|_{L_2}^2   \dd\tau
+  \int_s^t\int_{\mt} \bfu\cdot\Phi\dx \, {\rm d}W,\quad E_{0^-}=\frac{1}{2}\int_{\mt}|\bfu(0)|^2\dx,
\end{align*}
$\mathbb P$-a.s. for any $0\leq s<t,$
see Definition \ref{def:soleuler} for the precise formulation.}
%This is a key ingredient for further applications.
In the deterministic case the energy is non-increasing and non-negative such that the left- and right-sided limits $E_{t^-}$ and $E_{t^+}$ exist for any $t$. In the stochastic case one has instead that the difference between the energy
and a continuous function is increasing and that both are pathwise bounded such that the same conclusion holds, see also Remark \eqref{remark:energy}. Nevertheless, some care is required to implement this idea within the stochastic compactness method, see Section 
\ref{subsec:new}.
With the energy inequality just described at hand we are able to analyse the weak-strong uniqueness property of \eqref{eq:Eulera}. In a pathwise
approach we prove that a dissipative martingale solution agrees with the strong solution if both exist on the same probability space. This is reminiscent of the deterministic analysis in \cite{BrDeSz}. For this it is crucial that the energy inequality discussed above holds for any time $t$ in order to work with stopping times. A more realistic assumption is that the probability spaces, on which both solutions exit, are distinct.
In this situation we prove that the probability laws of the weak and the strong solution coincide. This is based on the classical Yamada-Watanabe argument, where a product probability space is constructed. Thereby, the weak-strong uniqueness in law can be reduced to the pathwise weak-strong uniqueness already obtained. We face several difficulties due to the fact that \eqref{eq:Eulera} is infinite-dimensional and, in particular, due to the non-separability of the space of generalised Young measures. \\
The paper is organised as follows. In Section \ref{sec:prelim} we present some preliminary material. In particular, we introduce the set-up for generalised Young measure, present the concept of random distributions from \cite{BFHbook} (in order to define progressive measurability for stochastic processes which are only equivalence classes in time) and prove an infinite dimensional It\^{o}-formula which is appropriate for our purposes. Finally, we collect some known material on the stochastic Navier--Stokes equations. The latter will be needed to approximate the stochastic Euler equations. In Section \ref{sec:DMS} we introduce the concept of dissipative martingale solutions
and prove their existence. \db{As in \cite{Kim3} we approximate \eqref{eq:Eulera} by a sequence of Navier--Stokes equations with vanishing viscosity and use a refined stochastic compactness method (based on Jakubowski's extension of Skorokhod's representation theorem \cite{jakubow})}. Section \ref{sec:weakstrong} is dedicated to weak-strong uniqueness.

\section{Mathematical framework}
\label{sec:prelim}
In this section we present various preliminaries on generalised Young measures, random variables and stochastic integration. Moreover, we collect some known material on the stochastic Navier--Stokes equations.
\subsection{Generalised Young measures}
\label{sec:GYM}
We denote by $\mathscr M$ the set of Radon measures, by $\mathscr M^+$ the set of non-negative Radon measures and by
$\mathscr P$ the set of probability measures. In our application there will be usually defined on a parabolic cylinder $Q_T=(0,T)\times\mt$. We will only use the integrability index $p=2$. Also, without further mentioning it, we will exclusively deal with generalised Young measures generated by sequences of functions with values in $\R^3$. A generalised Young measure is defined as follows.
\begin{definition}
A quantity $\mathcal V=(\nu_{t,x},\nu_{t,x}^\infty,\lambda)$ is called generalised Young measure provided
\begin{enumerate}[(a)]
\item $\nu_{t,x}\in L^\infty_{w^*}(Q_T;\mathscr P(\R^3))$ is a parametrised probability measure on $\R^3$;
\item $\lambda\in \mathscr M^+(\overline Q_T)$ is a non-negative Radon measure;
\item $\nu_{t,x}^\infty\in L^\infty_{w^*}(Q_T,\lambda;\mathscr P(\mathbb S^2))$ is a parametrised probability measure on $\mathbb S^2$;
\item We have $\int_{Q_T}\langle\nu_{t,x},|\bfxi|^2\rangle\dxt<\infty$.
\end{enumerate}
We denote the space of all generalised Young measure by $Y_2(Q_T)$.
\end{definition}
In particular, any Radon measure $\mu\in\mathscr M(Q_T)$ can be represented by a generalized Young measure by setting $\mathcal V=\big(\delta_{\mu^a(t,x)},\frac{\dd \mu^s}{\dd |\mu^s|},|\mu^s|\big),$
where $\mu=\mu^a\,\dd \mathscr L^n+\mu^s$ is the Radon-Nikod\'ym decomposition of $\mu$.
We consider now all Carath\'eodory functions $f:Q_T\times\R^3\rightarrow \R$ such that the recession function
\begin{align*}
f^\infty(t,x,\bfxi):=\lim_{s\rightarrow\infty}\frac{f(t,x,s\bfxi)}{s^2}
\end{align*}
is well-defined and continuous on $\overline Q_T\times \mathbb S^2$ (which implies that $f$ grows at most quadratically in $\bfxi$). We denote by $\mathcal G_2(Q_T)$ the space of all such functions. We say a sequence $\{\mathcal V^n\}=\{(\nu_{t,x}^n,\nu_{t,x}^{\infty,n},\lambda^n)\}$ converges weakly* in $Y_2(Q_T)$ to some  $\mathcal V=(\nu_{t,x},\nu_{t,x}^{\infty},\lambda)\in Y_2(Q_T)$ provided
\begin{align*}
&\langle\nu_{t,x}^n,f(\bfxi)\rangle\dxt+\langle\nu_{t,x}^{\infty,n},f^\infty(\bfxi)\rangle\,\dd\lambda^n\\
&\quad\rightharpoonup^\ast \langle\nu_{t,x},f(\bfxi)\rangle\dxt+\langle\nu_{t,x}^{\infty},f^\infty(\bfxi)\rangle\,\dd\lambda\quad\text{in}\quad \mathscr M(Q_T)
\end{align*}
for all $f\in \mathcal G_2(Q_T)$, that is
\begin{align*}
\int_{Q_T}&\varphi\langle\nu_{t,x}^n,f(\bfxi)\rangle\dxt+\int_{Q_T}\varphi\langle\nu_{t,x}^{\infty,n},f^\infty(\bfxi)\rangle\,\dd\lambda^n\\
&\rightarrow \int_{Q_T}\varphi\langle\nu_{t,x},f(\bfxi)\rangle\dxt+\int_{Q_T}\varphi\langle\nu_{t,x}^{\infty},f^\infty(\bfxi)\rangle\,\dd\lambda
\end{align*}
for all $\varphi\in C(\overline Q_T)$. Here $\bfxi\in\R^3$ denotes the corresponding dummy-variable. 
 The space $\mathcal G_2(Q_T)$ is a separable Banach space together with the norm
\begin{align*}
\|f\|_{\mathcal G_2(Q_T)}:=\sup_{(t,x)\in Q_T,\,\bfxi\in B_1(0)}(1-|\bfxi|)\Big|f\Big(t,x,\frac{\bfxi}{1-|\bfxi|}\Big)\Big|
\end{align*} 
  and $Y_2(Q_T,\R^n)$ is a subspace of its dual. Consequently, $Y_2(Q_T,\R^n)$ together with the weak* convergence introduced above is a quasi-Polish space.\\
A topological space $(X,\tau)$ is called quasi-Polish space if there is a countable family 
\begin{align}\label{eq:quasi}
\big\{f_n:X\rightarrow[-1,1];\,n\in\N\big\}
\end{align}
of continuous functions that separates points. In particular, separable Banach spaces endowed with the weak topology and dual spaces of separable Banach spaces are quasi-Polish spaces. Since we are interested in the long-time behaviour we also define
\begin{align*}
Y_2^{\mathrm loc}(Q_\infty)=\big\{\mathcal V:\,\mathcal V\in Y_2(Q_T)\,\forall T>0\big\}.
\end{align*}
Since the topology on $Y_2^{\mathrm loc}(Q_\infty)$ is generated by the topologies on $Y_2(Q_T)$ in the sense that
\begin{align*}
\mathcal{V}^n\rightharpoonup^\ast\mathcal V\quad\text{in}\quad Y_2^{\mathrm loc}(Q_\infty)\quad\Leftrightarrow\quad \mathcal{V}^n\rightharpoonup^\ast\mathcal V\quad\text{in}\quad Y_2(Q_T) \quad\forall T>0,
\end{align*}
it is clear that $Y_2^{\mathrm loc}(Q_\infty)$ is a quasi-Polish space as well.
\\
%Let us conclude this subsection with the compactness criterion for generalised Young measures (see \cite{DiMa} and \cite{AB}, see also \cite[Cor. 2]{KrRi} for the corresponding $L^1$-version).
%\begin{proposition}\label{prop:GYM}
%Let $\{\mathcal V^n\}=\{(\nu_{t,x}^n,\nu_{t,x}^{\infty,n},\lambda^n)\}$ be a sequence of generalised Young measures such that
%\begin{enumerate}[(a)]
%\item $\int_{Q_T}\langle\nu_{t,x}^n,|\bfxi|^2\rangle\dxt$ stay uniformly bounded;
%\item $\lambda^n(\overline Q_T)$ stays uniformly bounded.
%\end{enumerate}
%Then $\{\mathcal V^n\}$ is sequentially relatively compact in $Y_2(Q_T)$, that is there is a subsequence (not-relabelled) such that $\mathcal V^n\rightharpoonup^* \mathcal V$ in $Y_2(Q_T)$
%for some $\mathcal V\in Y_2(Q_T)$.
%\end{proposition}
%In particular, bounded sequence in $L^2(Q_T)$ are relatively compact in $Y_2(Q_T)$, where 
We can embed
$L^2(Q_T)$ into $Y_2(Q_T)$ via the inclusion
\begin{align}\label{eq:L2Y}
L^2(Q)\ni u\mapsto (\delta_{u(t,x)},0,0)\in Y_2(Q_T).
\end{align}
\db{By the Alaoglu-Bourbaki theorem, for any $L>0$ there is a compact subset $\mathcal K_L$ of $\mathcal G_2(Q_T)^*$ such that
\begin{align*}
\{(\delta_{u(t,x)},0,0)\in Y_2(Q_T):\|u\|_{L^2(Q_T)}\leq L\}\subset \mathcal K_L.
\end{align*}
Since $Y_2(Q_T)$ is weak* closed in $\mathcal G_2(Q_T)^*$ we conclude that $\mathcal K_L\cap Y_2(Q_T)$ compact, where clearly
\begin{align}\label{eq:L2Y'}
\{(\delta_{u(t,x)},0,0)\in Y_2(Q_T):\|u\|_{L^2(Q_T)}\leq L\}\subset \mathcal K_L\cap Y_2(Q_T).
\end{align}}
It is also useful to identify a generalised Young measure with a space-time distribution: For $\mathcal V=(\nu_{t,x},\nu_{t,x}^{\infty},\lambda)\in Y_2(Q_T)$ we define
\begin{align}\label{eq:2402b}
\begin{aligned}
C_c^\infty(Q_T\times\R^3)^2\ni (\psi,\varphi)&\mapsto\int_{Q_T}\int_{\R^3}\psi(t,x,\bfxi)\,\dd\nu_{t,x}(\bfxi)\dxt\\&+\int_{Q_T}\int_{\R^3}\varphi(t,x,\xi)\,\dd\nu_{t,x}^{\infty}(\bfxi) \,\dd\lambda(t,x).
\end{aligned}
\end{align}
As we will study probability laws on $Y_2(Q_T)$ we need a $\sigma$-field. A suitable candidate is the $\sigma$-algebra generated by the functions $\{f_n\}$ from \eqref{eq:quasi}, that is we set
\begin{align}\label{eq:sigmaY}
\mathscr B_{Y}:=\sigma\bigg(\bigcup_{n=1}^\infty\sigma(f_1)\bigg).
\end{align}
%Note that, using the identification from \eqref{eq:L2Y}, we have
%\begin{align}\label{eq:sigmaL2Y}
%L^2(Q_T)\in \mathscr B_{Y}.
%\end{align}

\subsection{Random distributions}
\label{sec:prelimsstoch}
Let $Q_T = (0,T) \times \mt$. Let $(\Omega,\mf,(\mf_t)_{t\geq0},\prst)$ be a complete stochastic basis with a Borel probability measure $\prst$ and a right-continuous filtration
$(\mf_t)$. For a measurable space
$(X,\mathcal{A})$ an $X$-valued random variable is a measurable mapping
$
\bfU : (\Omega,\mathfrak{F}) \to (X,\mathcal{A}).
$
We denote by $\sigma(\bfU)$ the smallest $\sigma$-field with respect to which $\bfU$ is measurable, that is
$$
\sigma(\bfU):= \big\{\{\omega\in\Omega;\,\bfU(\omega)\in A\};\, A\in\mathcal{A}\big\}.
$$
In order to deal with oscillations and concentrations in the convective term of approximate solutions to the stochastic Euler equations we have to deal with generalised Young measures (as introduced in the previous subsection) and hence we need to study
mappings $\bfU:\Omega\rightarrow Y_2(Q_T)$.  
Such an object is not a stochastic process in the classical sense as it is only defined a.e. in time.
Consequently, it becomes ambiguous to speak about progressive measurability. To overcome such problems the concept of random distributions has been introduced in \cite{BFHbook}[Chap. 2.2] to which we refer to for more details.
\begin{definition} \label{RDD1}
Let $( \Omega, \mathfrak{F}, \prst )$ be a complete probability space. A mapping
\[
\bfU: \Omega \to \big(C^\infty_c(Q_T)\big)'
\]
is called \emph{random distribution} if $\langle\bfU, \bfphi\rangle:\Omega\to\R$ is a measurable function for any $\bfphi \in C^\infty_c(Q_T)$.
\end{definition}
In order to introduce a concept of progressive measureability we consider  the $\sigma$-field  of all progressively measurable sets in $\Omega \times [0,T]$ associated to the filtration $(\mf_t)_{t\geq0}$. To be more precise, $A\subset \Omega\times[0,T]$ belongs to the progressively measurable $\sigma$-field provided the stochastic process $(\omega,t)\mapsto \mathbb{I}_A(\omega,t)$ is $(\mf_t)$-progressively measurable. We denote by $L^1_{\rm{prog}}(\Omega\times[0,T])$  the Lebesgue space of functions that are measurable with respect to the $\sigma$-field  of $(\mf_t)$-progressively measurable sets in $\Omega \times [0,T]$ and we denote by  $\mu_{\rm{prog}}$ the measure $\p\otimes\mathfrak{L}_{[0,T]}$ restricted to the progressively measurable $\sigma$-field.
\begin{definition} \label{RDD3}
\index{adapted!random distribution}
Let $\bfU$ be a random distribution in the sense of Definition \ref{RDD1}.
\begin{enumerate}
\item[(a)]
We say that $\bfU$ is \emph{adapted}\index{random distribution!adapted} to $( \mathfrak{F}_t )$
if $\left< \bfU, \bfphi \right>$ is $(\mathfrak{F}_t)$-measurable for any $\bfphi \in C^\infty_c(Q_t)$.
\item[(b)] We say that $\bfU$ is \emph{$(\mf_t)$-progressively measurable} if $\left< \bfU, \bfphi \right>\in L^1_{\rm{prog}}(\Omega\times[0,T])$ for any  $\bfphi \in C^\infty_c(Q_T)$.
\end{enumerate}
%\item
%The family of $\sigma$-fields $ \left( \sigma_t[\vc{U}] \right)_{t \geq 0}$ given as\index{random distribution!history of}
%\[
%\sigma_t [\vc{U}] := \cap_{s > t} \sigma \left( \cup_{\bfphi \in \DC(Q_s)}
%\left\{ \left<\vc{U}, \bfphi \right> < 1 \right\}  \cup \{N\in\mf,\p(N)=0\}\right)
%\]
%is called \emph{history} of $\vc{U}$.
\end{definition}
The above concept is convenient when dealing with general distributions. It coincides with the standard concept of progressive measurability 
as long as the distribution defines a stochastic process, see \cite[Chapter 2, Lemma 2.2.18]{BFHbook}. Also, if a random distribution is $(\mf_t)$-adapted, there is a modification which is $(\mf_t)$-progressively measureable, cf. \cite[Chapter 2, Lemma 2.2.18]{BFHbook}, as in the classical situation.
\db{The family of $\sigma$-fields $ \left( \sigma_t[\bfU] \right)_{t \geq 0}$ given as\index{random distribution!history of}
\begin{align}\label{eq:2402a}
\sigma_t [\bfU] := \bigcap_{s > t} \sigma \bigg( \bigcup_{\bfvarphi \in C_c^\infty(Q_s )}
\left\{ \left<\bfU, \bfvarphi \right> < 1 \right\}  \cup \{N\in\mf,\p(N)=0\}\bigg)
\end{align}
is called \emph{history} of $\bfU$. Clearly, any random distribution is adapted to its history.}

\subsection{Stochastic analysis}
\label{subsec:prob}
Let $(\Omega,\mf,(\mf_t)_{t\geq0},\prst)$ be a complete stochastic basis with a Borel probability measure $\prst$ and a right-continuous filtration
$(\mf_t)$. We refer the reader to \cite{daprato} for more details on the following elements of stochastic calculus in infinite dimensions.
Let $\mathfrak U$ be a separable Hilbert space and let $(\bfe_k)_{k\in\N}$ be an orthonormal basis of $\mathfrak U$. We denote by $L_2(\mathfrak U,L^2(\mt))$ the set of Hilbert-Schmidt operators from $\mathfrak U$ to $L^2(\mt)$.  
Throughout the paper we consider a cylindrical Wiener process
$W=(W_t)_{t\geq0}$ which has the form
\begin{align}\label{eq:W}
W(\sigma)=\sum_{k\in\N}\beta_k(\sigma)\bfe_k
\end{align}
with a sequence $(\beta_k)$ of independent real valued Brownian motions on $(\Omega,\mf,(\mf_t)_{t\geq0},\prst)$. The stochastic integral
\begin{align*}
\int_0^t \psi\,\dd W,\quad \psi\in L^2(\Omega,\F,\p;L^2(0,T;L_2(\mathfrak U,L^2(\mt)))),
\end{align*}
where $\psi$ is $(\F_t)$-progressively measurable,
defines a $\p$-almost surely continuous $L^2(\mt)$ valued $(\F_t)$-martingale. Moreover, we can multiply with test-functions since
 \begin{align*}
\bigg\langle\int_0^t \psi\,\dd W,\bfphi\bigg\rangle_{L^2(\mt)}=\sum_{k=1}^\infty \int_0^t\langle\psi( \bfe_k),\bfphi\rangle_{L^2(\mt)}\,\dd\beta_k,\quad \bfphi\in L^2(\mt),
\end{align*}
is well-defined (the series converges in $L^2(\Omega,\F,\p; C[0,T])$).\\
Define further $\mathfrak U_0\supset \mathfrak U$  as
\begin{align}\label{eq:U0}
\mathfrak U_0:=\left\{\bfe=\sum_k \alpha_k\bfe_k\in \mathfrak U:\,\,\sum_k \frac{\alpha_k^2}{k^2}<\infty\right\},
\end{align}
thus the embedding $\mathfrak U\hookrightarrow \mathfrak U_0$ is Hilbert-Schmidt and trajectories of $W$ are $\p$-a.s. continuous with values in in $\mathfrak U_0$.\\
%In order to show regularity of solutions we suppose the following linear growth assumptions on $\Phi$: For each $\bfz\in L^2(G)$ there is a mapping $\Phi(\bfz):U\rightarrow L^{2}(G)^D$ defined by $\Phi(\bfz)\bfe_k=g_k(\cdot,\bfz(\cdot))$. In particular, we suppose
%that $g_k\in C^1(G\times\R^D)$ and the following conditions
%\begin{align}\label{eq:phi}\begin{aligned}
%\sum_{k\in\N}|g_k(x,\bfxi)|^2 \leq c(1+|\bfxi|^2)&,\quad
%\sum_{k\in\N}|\nabla_{\bfxi} g_k(x,\bfxi)|^2\leq c,\quad\bfxi\in\R^D,\\
%\sum_{k\in\N}|\nabla_x g_k(x,\bfxi)|^2 &\leq c(1+|\bfxi|^2).
%\end{aligned}
%\end{align}
The following infinite dimensional It\^{o}-formula is a variant of \cite[Lemma 3.1]{BFH}.
\begin{lemma} \label{lem}
Let $\StoB$ be a stochastic basis and let be $W$ a cylindrical $(\mathfrak F_t)$-Wiener process.
Let $\bfw^1,\bfw^2$ be $(\mathfrak F_t)$-progressively measurable satisfying
$\bfw^1\in C_w([0,T];L^2_{\Div}(\mt))$, $\bfw^2\in C([0,T];L^2_{\Div}(\mt))$ and $\bfw^2\in L^1(0,T;C^1(\mt))$ a.s. such that
\begin{align*}
\bfw^1,\bfw^2\in L^2_{w^*}(\Omega;L^\infty(0,T;L^2(\mt))).
\end{align*}
 Suppose that there are
\begin{align*}
\lambda_t\in L^1_{w^*}(\Omega;L^\infty_{w^*}(0,T&;\mathscr M^+(\mt))),\quad\Phi^1\in L^2(\Omega;L^2(0,T;L_2(\mathfrak U;L^2(\mt)))),\\
\bfH^1&\in L^1_{w^*}(\Omega;L^\infty(0,T;L^1(\mt))),
\end{align*}
\db{as well as a random distribution $\bfG^1$ such that $\bfG^1\in L^\infty(Q_T,\lambda_t\otimes\mathcal L^1)$ $\p$-a.s. and
\begin{align*}
\E\bigg[\inf_{\lambda_t\otimes\mathcal L^1(\mathscr N)=0}\|\bfG^1\|_{L^\infty(Q_T\setminus\mathscr N)}\bigg]<\infty.
\end{align*}}
We further assume that $\lambda_t$, $\Phi^1$, $\bfH^1$ and $\bfG^1$
are progressively $(\mathfrak F_t)$-measurable and that
\begin{align} \label{rel1}
\begin{aligned}
\int_{\mt}\bfw^{1}(t)\cdot\bfvarphi\dx&=\int_{\mt}\bfw^{1}(0)\cdot\bfvarphi\dx+\int_0^t\int_{\mt}\bfH^1:\nabla\bfvarphi\dxs\\&+\int_0^t\int_{\mt}\bfG^{1}:\nabla\bfvarphi\,\dd\lambda_\sigma\,\dd\sigma
+\int_{\mt}\bfvarphi\cdot\int_0^t\Phi^1\,\dd W\dx
\end{aligned}
\end{align}
for all $\bfvarphi\in C^\infty_{\Div}(\mt)$.\\
Suppose further that there are
\begin{align*}
\bfh^2\in L^{1}_{w^*}(\Omega;L^\infty(Q)),\quad\Phi^2\in L^2(\Omega;L^2(0,T;L_2(\mathfrak U;L^2(\mt)))),
\end{align*}
$(\mathfrak F_t)$-progressively measurable such that
such that
\begin{align} \label{rel2}
\begin{aligned}
\int_{\mt}\bfw^{2}(t)\cdot\bfvarphi\dx&=\int_{\mt}\bfw^{2}(0)\cdot\bfvarphi\dx+\int_0^t\int_{\mt}\bfh^2\cdot\bfvarphi\dxs\\
&+\int_{\mt}\bfvarphi\cdot\int_0^t\Phi^2\,\dd W\dx
\end{aligned}
\end{align}
for all $\varphi\in C^\infty_{\Div}(\mt)$.
Then we have for all $t\geq 0$ $\p$-a.s.
\begin{align} \nonumber
\int_{\mt} \bfw^1(t)\cdot\bfw^2(t)\dx
&=\int_{\mt} \bfw^1(0)\cdot\bfw^2(0)\dx+\int_0^t\int_{\mt}\bfH^1:\nabla\bfw^2\dxs\\&+\int_0^t\int_{\mt}\bfG^{1}:\nabla\bfw^2\,\dd\lambda_\sigma\,\dd\sigma
+\int_{\mt}\bfw^2\cdot\int_0^t\Phi^1\,\dd W\dx\nonumber\\
&+\int_0^t\int_{\mt}\bfh^2\cdot\bfw^1\dxs+\int_{\mt}\bfw^1\cdot\int_0^t\Phi^2\,\dd W\dx\nonumber\\
&+ \sum_{k\geq1}\int_0^t\int_{\mt}  \Phi^1 \bfe_k\cdot \Phi^2 \bfe_k\dx  \,{\rm d}t.
\label{result}
\end{align}
%where
%\begin{equation} \label{result1}
%\mathbb{M}_t = \sum_{k\geq1}\int_0^t\int_{\mt} \Big[  \bfw^1  \Phi^2 e_k  + \bfw^2 \Phi^1 e_k  \Big] \dx\,\Dif W_k.
%\end{equation}
\end{lemma}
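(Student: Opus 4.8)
The plan is to regularise the two equations in space, reducing the identity \eqref{result} — pointwise in $x\in\mt$ — to the classical finite-dimensional It\^o product rule, and then to remove the regularisation by a limit passage; this is the scheme of \cite[Lemma 3.1]{BFH}. The only genuinely new ingredient here is the measure-valued drift $\int_0^t\int_{\mt}\bfG^1:\nabla\bfw^2\,\dd\lambda_\sigma\,\dd\sigma$, and it is essentially the only place where the extra regularity $\bfw^2\in L^1(0,T;C^1(\mt))$ is consumed. That $\bfw^1$ is merely \emph{weakly} continuous in time is not an obstruction: after a spatial mollification $\bfw^1_\delta:=\bfw^1\ast\omega_\delta$ is smooth in $x$ and, testing the weak continuity against $\omega_\delta(x-\cdot)\in L^2(\mt)$, it is continuous in $t$, hence a genuine continuous semimartingale for each fixed $x$.

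Concretely, I fix a family $(\omega_\delta)_{\delta>0}$ of symmetric spatial mollifiers on $\mt$ and set $\bfw^i_\delta:=\bfw^i\ast\omega_\delta$, $\bfH^1_\delta:=\bfH^1\ast\omega_\delta$, $(\bfG^1\lambda_\sigma)_\delta:=\big(\bfG^1(\sigma,\cdot)\,\lambda_\sigma\big)\ast\omega_\delta$, $\bfh^2_\delta:=\bfh^2\ast\omega_\delta$ and $(\Phi^i\bfe_k)_\delta:=(\Phi^i\bfe_k)\ast\omega_\delta$; each is $(\mf_t)$-progressively measurable and divergence-free whenever the original object is. Testing \eqref{rel1} and \eqref{rel2} componentwise against suitable mollified divergence-free test functions and using that $\bfw^i$ and $\bfw^i_\delta$ are divergence-free gives, for every $x\in\mt$,
\begin{align*}
\bfw^1_\delta(t,x)&=\bfw^1_\delta(0,x)-\int_0^t\big(\Div\bfH^1_\delta+\Div(\bfG^1\lambda_\sigma)_\delta\big)(\sigma,x)\,\dd\sigma+\sum_k\int_0^t(\Phi^1\bfe_k)_\delta(\sigma,x)\,\dd\beta_k,\\
\bfw^2_\delta(t,x)&=\bfw^2_\delta(0,x)+\int_0^t\bfh^2_\delta(\sigma,x)\,\dd\sigma+\sum_k\int_0^t(\Phi^2\bfe_k)_\delta(\sigma,x)\,\dd\beta_k,
\end{align*}
where the finite-variation parts are absolutely continuous in $t$ because $\lambda=\lambda_t\otimes\mathcal L^1$ has absolutely continuous time marginal and $\bfG^1\in L^\infty(Q_T,\lambda)$, $\bfH^1\in L^\infty(0,T;L^1(\mt))$, $\bfh^2\in L^\infty(Q)$. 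I then apply the classical It\^o product rule componentwise to $\bfw^1_\delta(t,x)\cdot\bfw^2_\delta(t,x)$, integrate over $x\in\mt$, interchange $\int_{\mt}\dx$ with the It\^o integrals by the stochastic Fubini theorem (licit since $\Phi^i\in L^2(\Omega;L^2(0,T;L_2(\mathfrak U;L^2(\mt))))$), and integrate by parts in $x$ in the two drift contributions of $\bfw^1_\delta$. This produces exactly \eqref{result} with every occurrence of $\bfw^i,\bfH^1,\bfG^1,\bfh^2,\Phi^i$ replaced by its mollification, the bracket term appearing as $\sum_k\int_0^t\int_{\mt}(\Phi^1\bfe_k)_\delta\cdot(\Phi^2\bfe_k)_\delta\dx\,\dd\sigma$.

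It remains to let $\delta\to0$ with $t$ fixed. By symmetry of $\omega_\delta$ every term can be rearranged so that at most one mollifier survives, acting on $\bfw^2$ or $\nabla\bfw^2$; one then uses that convolution converges, as $\delta\to0$, in $L^2(\mt)$ on $L^2$-functions (this handles the left-hand side and the $\bfh^2$- and It\^o-terms), in $C(\mt)$ on continuous functions (so $\nabla\bfw^2_\delta\to\nabla\bfw^2$ uniformly in $x$, and by dominated convergence with majorant $\|\nabla\bfw^2\|_{C(\mt)}\in L^1(0,T)$ also in $L^1$ in $t$), and in $L^1(\mt)$ on $L^1$-functions ($\bfH^1$). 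The Lebesgue-type terms then converge $\p$-a.s.\ by dominated convergence; for the measure term one writes
$$\int_{\mt}(\bfG^1\lambda_\sigma)_\delta:\nabla\bfw^2_\delta\dx=\int_{\mt}\bfG^1:(\nabla\bfw^2_\delta)_\delta\,\dd\lambda_\sigma\longrightarrow\int_{\mt}\bfG^1:\nabla\bfw^2\,\dd\lambda_\sigma\quad(\text{a.e.\ }\sigma),$$
dominated by $\|\bfG^1\|_{L^\infty(Q_T,\lambda)}\,\|\nabla\bfw^2(\sigma)\|_{C(\mt)}\,\lambda_\sigma(\mt)\in L^1(0,T)$ $\p$-a.s. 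The It\^o-integral terms converge in $L^2(\Omega)$ by the It\^o isometry and dominated convergence (with $\sum_k|\langle\Phi^i\bfe_k,\bfw^j\rangle|^2\le\|\Phi^i\|_{L_2}^2\|\bfw^j\|_{L^2}^2\in L^1(\Omega\times(0,T))$), and the bracket term by Cauchy--Schwarz and dominated convergence; passing to a subsequence along which all convergences are $\p$-a.s.\ gives \eqref{result} for each fixed $t$, $\p$-a.s. Finally both sides of \eqref{result} are continuous in $t$ (the left-hand side because $\bfw^1\in C_w([0,T];L^2_{\Div}(\mt))$ and $\bfw^2\in C([0,T];L^2_{\Div}(\mt))$, the right-hand side because it is built from absolutely continuous Lebesgue integrals and continuous martingales), so the identity, valid $\p$-a.s.\ for all rational $t$, extends to all $t\geq0$, $\p$-a.s.

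The step I expect to be the main obstacle is making the measure-valued contribution fully rigorous: one has to check that $\big(\bfG^1(\sigma,\cdot)\lambda_\sigma\big)\ast\omega_\delta$ is a well-defined, $(\mf_t)$-progressively measurable, $L^1$-in-time drift coefficient for the mollified equation, and that the limit passage displayed above is genuinely dominated uniformly in $\delta$ — this is precisely where the hypotheses $\lambda=\lambda_t\otimes\mathcal L^1$, $\bfG^1\in L^\infty(Q_T,\lambda)$ with $\E\big[\|\bfG^1\|_{L^\infty(Q_T,\lambda)}\big]<\infty$, and $\bfw^2\in L^1(0,T;C^1(\mt))$ are used in an essential way. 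The remaining points (progressive measurability of the mollified fields, applicability of the stochastic Fubini theorem, and the elementary mollifier estimates) are routine bookkeeping.
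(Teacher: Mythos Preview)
Your overall scheme (mollify in space, apply an It\^o product rule, pass to the limit) is the same as the paper's, and your treatment of the measure-valued drift and of the limit passage is sound. There is, however, a genuine gap in the step where you claim to obtain \emph{pointwise} semimartingale decompositions for $\bfw^1_\delta(t,x)$ and $\bfw^2_\delta(t,x)$ ``for every $x\in\mt$'' by testing \eqref{rel1}--\eqref{rel2} ``componentwise against suitable mollified divergence-free test functions''. These two requirements are incompatible: to recover $\bfw^1_\delta(t,x)$ componentwise one must test with $\omega_\delta(x-\cdot)\,\bfe_i$, which is \emph{not} divergence-free, whereas \eqref{rel1} and \eqref{rel2} are only assumed for $\bfvarphi\in C^\infty_{\Div}(\mt)$. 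Since neither $\Phi^i\bfe_k$ nor $\bfh^2$ are assumed to take values in $L^2_{\Div}(\mt)$, the pointwise identities you write down are in general false: a pressure gradient is missing on the right-hand side. You could repair this by invoking de Rham to produce pressures $\pi^1,\pi^2$ and then observing that $\nabla\pi^i_\delta$ drops out when paired with the divergence-free $\bfw^j_\delta$, but this needs to be said and justified.

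The paper sidesteps the issue entirely and is in fact simpler: it mollifies \emph{only} $\bfw^1$ (not $\bfw^2$), observes that for fixed $\varrho$ the drift of $\bfw^1_\varrho$ is a bounded linear functional on $L^2_{\Div}(\mt)$, and then applies the Hilbert-space It\^o formula \cite[Thm.~4.17]{daprato} directly on $L^2_{\Div}(\mt)$ to the real-valued process $t\mapsto\int_{\mt}\bfw^1_\varrho(t)\cdot\bfw^2(t)\dx$. Working on $L^2_{\Div}$ rather than pointwise makes the divergence-free constraint automatic (no pressures ever appear), avoids the stochastic Fubini step, and halves the number of mollifiers to remove in the limit. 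Your route is salvageable, but the paper's is both shorter and cleaner.
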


\begin{proof}
In order to justify the application of It\^{o}'s formula to the process
$t\mapsto \int_{\mt} \bfw^1(t)\cdot\bfw^2(t)\dx$ we have to perform some regularisations in equation
\eqref{rel1} using mollification in space with parameter $\varrho>0$. For $\bfphi\in L^2_{\Div}(\mt)$ we have $\bfphi_\varrho\in C^\infty_{\Div}(\mt)$ and
\begin{align}\label{eq:0403a}
\begin{aligned}
\|\bfphi_\varrho\|_{W^{k,p}_x}&\leq\,c(\varrho)\|\bfphi\|_{L^2_x}\quad \forall k\in\N_0,\,\,p\in[1,\infty],\\
\|\bfphi_\varrho\|_{W^{k,p}_x}&\leq\,\|\bfphi\|_{W^{k,2}_x}\quad \forall k\in\N_0,\,\,p\in[1,\infty],
\end{aligned}
\end{align}
 provided $\bfphi\in L^{p}(\mt)$ or $\bfphi\in W^{k,p}(\mt)$ respectively.
Moreover,
\begin{align}\label{eq:0403b}
\begin{aligned}
\bfphi_\varrho&\rightarrow \bfphi\quad\text{in}\quad W^{k,p}(\mt)\quad\forall k\in\N_0,\,\,p\in[1,\infty),\\
\bfphi_\varrho&\rightarrow \bfphi\quad\text{in}\quad C^{k}(\mt)\quad\forall k\in\N_0,
\end{aligned}
\end{align}
as $\varrho\rightarrow0$ provided $\bfphi\in W^{k,p}(\mt)$ or $C^{k}(\mt)$ respectively.
 Finally, the operator $(\cdot)_\varrho$ commutes with derivatives. Inserting
$\bfphi_\varrho$ in \eqref{rel1} yields
\begin{align*}
\int_{\mt}\bfw^{1}_\varrho(t)\cdot\bfvarphi\dx&=\int_{\mt}\bfw^{1}_\varrho(0)\cdot\bfvarphi\dx+\int_0^t\int_{\mt}\bfH^1:\nabla(\bfvarphi)_\varrho\dxs\\&+\int_0^t\int_{\mt}\bfG^{1}:\nabla(\bfvarphi)_\varrho\,\dd\lambda_\sigma\,\dd\sigma
+\int_{\mt}\bfvarphi\cdot\int_0^t\Phi^1_\varrho\,\dd W\dx,
\end{align*}
where $\Phi^1_\varrho$ is given by $\Phi^1_\varrho \bfe_k=(\Phi^1 \bfe_k)_\varrho$ for $k\in\N$.
Using \eqref{eq:0403a} we have for fixed $\varrho>0$
\begin{align*}
\bigg|\int_0^t\int_{\mt}\bfH^1:\nabla(\bfvarphi)_\varrho\dxs\bigg|&\leq\sup_{0\leq t\leq T}\int_{\mt}|\bfH^1|\dx\int_0^T\|\nabla(\bfvarphi)_\varrho\|_{L^\infty_x} \ds\\
&\leq\,c(\varrho)\,\sup_{0\leq t\leq T}\int_{\mt}|\bfH^1|\dx\int_0^T\|\bfvarphi\|_{L^2_x} \ds
\end{align*}
$\p$-a.s. as well as
\begin{align*}
\bigg|\int_0^t\int_{\mt}\bfG^1:\nabla(\bfvarphi)_\varrho\,\dd\lambda_\sigma\ds\bigg|&\leq\sup_{Q_T}|\bfG^1|\int_0^T\int_{\mt}|\nabla(\bfvarphi)_\varrho| \,\dd\lambda_\sigma\ds\\
&\leq\sup_{Q_T}|\bfG^1|\sup_{0\leq \sigma\leq T}\lambda_\sigma(\mt)\int_0^T\|\nabla(\bfvarphi)_\varrho\|_{L^\infty_x}\ds\\
&\leq\,c(\varrho)\,\sup_{Q_T}|\bfG^1|\sup_{0\leq \sigma\leq T}\lambda_\sigma(\mt)\int_0^T\|\bfvarphi\|_{L^2_x}\ds.
\end{align*}
Hence the deterministic parts in the equation for $\bfw^1_\varrho$ are functionals on $L^2$. 
Consequently, we can apply It\^{o}'s formula on the Hilbert space $L^2_{\Div}(\mt)$ (see \cite[Thm. 4.17]{daprato})
to the process $t\mapsto \int_{\mt} \bfw^1_\varrho(t)\cdot\bfw^2(t)\dx$
to obtain
\begin{align*}
\int_{\mt} \bfw^1_\varrho(t)\cdot\bfw^2(t)\dx
&=\int_{\mt} \bfw^1_\varrho(0)\cdot\bfw^2(0)\dx+\int_0^t\int_{\mt}\bfH^1:(\nabla\bfw^2)_\varrho\dxs\\&+\int_0^t\int_{\mt}\bfG^{1}:(\nabla\bfw^2)_\varrho\,\dd\lambda_\sigma\,\dd\sigma
+\int_{\mt}\int_0^t\bfw^2\cdot\Phi^1_\varrho\,\dd W\dx\nonumber\\
&+\int_0^t\int_{\mt}\bfh^2\cdot\bfw^1_\varrho\dxs+\int_{\mt}\int_0^t\bfw^1_\varrho\cdot\Phi^2\,\dd W\dx\nonumber\\
&+ \sum_{k\geq1}\int_0^t\int_{\mt}  \Phi^1_\varrho \bfe_k\,\Phi^2 \bfe_k\dx \, {\rm d}t.
\end{align*}
Passing to the limit $\varrho\rightarrow0$ and using \eqref{eq:0403b} together with the assumptions on $\bfw^1$ and $\bfw^2$ we see that all terms converge to their corresponding counterparts and \eqref{result} follows.
\end{proof}

We conclude this section with a finite dimensional version of \cite[Chapter 2, Theorem 2.9.1]{BFHbook}. The proof of which follows along the same line (in fact, it is even simpler). 
\begin{proposition} \label{RDT3}
Let $U$  be a random distribution such that $U\in L^1_{\mathrm loc}([0,\infty))$ $\mathbb P$-a.s. Suppose that there is a bounded continuous function $b$ and a collection of random distributions $\mathbb G=(G_k)_{k=1}^\infty$ such that $\mathbb P$-a.s.
\begin{align*}
\sum_{k=1}^\infty|G_k|^2\in L^1_{\mathrm loc}([0,\infty)).
\end{align*}
Let $U_0$ be an $\mathfrak F_0$-measurable random variable and let
$W=(W_k)_{k=1}^\infty$ be a collection of real-valued independent Brownian motions. Suppose that
the filtration
\[
\mathfrak{F}_t = \sigma\Big(\sigma \big(U_0,\bfr_t{U},\bfr_t{W},\bfr_t\mathbb G\big)\Big),\ t \geq 0,
\]
is non-anticipative with respect to $W$.
Let $\tilde{U}_0$ be another random distribution and $\tilde{W}=(\tilde W_k)_{k=1}^\infty$ another stochastic process and random distributions and random distributions $\tilde{\mathbb G}=(\tilde G_k)_{k=1}^\infty$, such their joint laws coincide, namely,
\[
\mathcal{L}[ U_0,U, W,\mathbb G] = \mathcal{L}[ \tilde U_0,\tilde{U}, \tilde{W},\tilde{\mathbb G} ]\ \mbox{or}\ [ U_0,U, W,\mathbb G] \overset{d}{\sim} [ \tilde U_0,\tilde{U}, \tilde{W},\tilde{\mathbb G} ].
\]

Then $\tilde W$ is a collection of real-valued independent Wiener processes, the filtration
\[
\tilde{\mathfrak{F}}_t = \sigma\Big(\sigma \big(\tilde U_0,\bfr_t\tilde{U},\bfr_t\tilde{W},\bfr_t\tilde{\mathbb G}\big)\Big) ,\ t \geq 0,
\]
is non-anticipative with respect to $\tilde{W}$, $\tilde U_0$ is $\tilde{\mathfrak F}_0$-measurable, and
\begin{equation} \label{RD12}
\begin{split}
&\mathcal{L} \left[ \int_0^\infty \left[ \partial_t \psi U + b(U)\psi  \right] \dt
+ \int_0^\infty\sum_{k=1}^\infty \psi G_k\dd W_k + \psi(0) U_0 \right]\\
&=\mathcal{L}  \left[ \int_0^\infty \left[ \partial_t \psi \tilde U + b(\tilde U)\psi  \right] \dt
+ \int_0^\infty\sum_{k=1}^\infty \psi \tilde G_k\dd \tilde W_k + \psi(0) \tilde U_0 \right]
\end{split}
\end{equation}
for any deterministic $\psi \in C^\infty_c([0,\infty))$.

\end{proposition}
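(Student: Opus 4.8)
The plan is to follow the Yamada--Watanabe-type argument of \cite[Chapter 2, Theorem 2.9.1]{BFHbook}, whose finite-dimensional variant this is, exploiting throughout that every one of the four conclusions is a property depending only on the joint law $\mathcal L[U_0,U,W,\mathbb G]$. Accordingly I would treat in turn: (i) $\tilde W$ is a collection of independent real-valued Wiener processes; (ii) $\tilde U_0$ is $\tilde{\mathfrak F}_0$-measurable; (iii) $\tilde{\mathfrak F}_t$ is non-anticipative with respect to $\tilde W$; (iv) the two equation functionals in \eqref{RD12} have the same law.

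Items (i) and (ii) are immediate. A collection of real processes is one of independent Wiener processes precisely when all its finite-dimensional marginals are the appropriate product Gaussians and its paths are a.s.\ continuous; these are properties of the law on $C([0,\infty))^{\N}$, so since $W$ and $\tilde W$ have the same law, $\tilde W$ inherits them. For (ii), by the very definition of $\tilde{\mathfrak F}_t$ one has $\tilde{\mathfrak F}_0 \supseteq \sigma(\tilde U_0)$, hence $\tilde U_0$ is $\tilde{\mathfrak F}_0$-measurable.

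Item (iii) is the crux. Fix $0\le t<s$; I must show $\tilde W_s-\tilde W_t$ is independent of $\tilde{\mathfrak F}_t$, i.e.\ of the $\sigma$-field generated by $\tilde U_0$ together with the restrictions of $\tilde U,\tilde W,\tilde{\mathbb G}$ to $[0,t]$. Since $U$ and the $G_k$ are random distributions, I would encode their restrictions through the histories $\sigma_t[\,\cdot\,]$ of \eqref{eq:2402a}, each of which is generated by a \emph{countable} family of functionals $\langle\,\cdot\,,\varphi\rangle$, $\varphi\in C^\infty_c(Q_t)$, together with the coordinate evaluations of $W$ on $[0,t]$; thus $\tilde{\mathfrak F}_t$ is generated by a countable $\pi$-system of events of the form $\{X\in B\}$ with $X$ a finite tuple of such functionals of the data up to time $t$ and $B$ Borel. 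For any bounded continuous $g,h$,
\begin{align*}
\E\big[g(X)\,h(W_s-W_t)\big]=\E\big[g(X)\big]\,\E\big[h(W_s-W_t)\big]
\end{align*}
by the assumed non-anticipativity of $\mathfrak F_t$ with respect to $W$; the identical relation for the tilde data follows because $(X,W_s-W_t)$ is a fixed Borel image of $(U_0,U,W,\mathbb G)$ and laws are preserved, so $(\tilde X,\tilde W_s-\tilde W_t)\overset{d}{\sim}(X,W_s-W_t)$. A Dynkin ($\pi$--$\lambda$) argument then upgrades this to independence of $\tilde W_s-\tilde W_t$ from all of $\tilde{\mathfrak F}_t$.

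For item (iv) I would show that the left-hand side of \eqref{RD12} is, modulo $\p$-null sets, a single Borel functional of $(U_0,U,W,\mathbb G)$ that is blind to the underlying basis, and likewise on the tilde side. The deterministic part $\int_0^\infty[\partial_t\psi\,U+b(U)\psi]\dt+\psi(0)U_0$ is a Borel function of $(U_0,U)$: the integral converges because $\psi\in C^\infty_c$, $U\in L^1_{\mathrm{loc}}$ a.s.\ and $b$ is bounded and continuous. For the stochastic part, mollifying the $G_k$ in time and approximating by elementary integrands expresses $\int_0^\infty\sum_k\psi G_k\dd W_k$ as a limit in probability of Riemann-type sums $\sum_j\sum_k\psi(t_j)\langle G_k,\chi_j^\varrho\rangle\,(W_k(t_{j+1})-W_k(t_j))$, each of which is the same fixed Borel function of finitely many increments of $W$ and finitely many smeared evaluations of $\mathbb G$, hence has the same law under the untilde and tilde data. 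Using (iii), the tilde integral $\int_0^\infty\sum_k\psi\tilde G_k\dd\tilde W_k$ is well defined and is the corresponding limit in probability of the tilde sums; since convergence in probability together with equality of the laws of the approximants forces equality of the laws of the limits, and since $\psi(0)\tilde U_0\overset{d}{\sim}\psi(0)U_0$ jointly with everything else, \eqref{RD12} follows. The main obstacle is precisely the bookkeeping in (iii)--(iv): one must verify that the histories of the random distributions $U,G_k$ are countably generated by continuous functionals (so that the $\pi$-system argument and the transfer of independence go through) and that the It\^o integral is realised by one Borel map independent of the stochastic basis; this is where \eqref{eq:2402a}, \eqref{eq:sigmaY} and the time-mollification of random distributions do the work. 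Once these points are in place the rest is a routine transfer of law-only properties across the equality in distribution, and --- unlike in the infinite-dimensional \cite[Chapter 2, Theorem 2.9.1]{BFHbook} --- no separability obstruction for $Y_2(Q_T)$ intervenes, which is why the argument is simpler.
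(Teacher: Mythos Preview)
Your proposal is correct and follows exactly the route the paper itself indicates: the paper does not spell out a proof but simply records that the proposition is a finite-dimensional variant of \cite[Chapter~2, Theorem~2.9.1]{BFHbook} whose proof ``follows along the same line (in fact, it is even simpler)''. Your four-step breakdown---transferring the Wiener property and $\tilde{\mathfrak F}_0$-measurability directly from the equality of laws, establishing non-anticipativity via a $\pi$--$\lambda$ argument on the countably generated histories, and identifying the stochastic integral as a basis-independent Borel functional through Riemann-sum approximation---is precisely the skeleton of that reference, and your closing remark that no separability obstruction from $Y_2(Q_T)$ intervenes here matches the paper's parenthetical ``it is even simpler''.
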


\subsection{Stochastic Navier--Stokes equations}
The Euler equations are linked via a vanishing viscosity limit to the Navier--Stokes equations.
The stochastic Navier--Stokes equations with viscosity $\mu>0$ read as
\begin{align}\label{eq:SNS}
\left\{\begin{array}{rc}
\dd\bfu=\mu\Delta\bfu\dt-(\nabla\bfu)\bfu\dt-\nabla\pi\dt+\Phi\dd W
& \mbox{in $Q$,}\\
\Div \bfu=0\qquad\qquad\qquad\qquad\qquad\,\,\,\,& \mbox{in $Q$,}\\
%\bfu=0\qquad\qquad\qquad\qquad\,\,\,\,\quad\quad& \mbox{ \,on $\partial \mathcal O$.}\\
%\bfu(0)=\bfu_0\,\qquad\qquad\qquad\qquad\qquad&\mbox{ \,in $\mathcal O$.}
\end{array}\right.
\end{align}
Here $W$ is a cylindrical Wiener process as introduced in the previous subsection. In the following we give a rigorous definition of a solution to \eqref{eq:SNS}.
\begin{definition}[Solution]\label{def:sol}
Let $\Lambda$ be a Borel probability measure on $L^2_{\Div}(\mt)$ and let $\Phi\in L_2(\mathfrak U;L^2(\mt))$. Then
$$\big((\Omega,\mf,(\mf_t),\prst),\bfu,W)$$
is called a finite energy weak martingale solution to \eqref{eq:SNS} with the initial data $\Lambda$ provided
\begin{enumerate}[(a)]
\item $(\Omega,\mf,(\mf_t),\prst)$ is a stochastic basis with a complete right-continuous filtration;
\item $W$ is an $(\mf_t)$-cylindrical Wiener process;
\item The velocity field $\bfu$ is $(\mf_t)$-adapted and satisfies $\prst$-a.s.
 $$\bfu\in C_{\mathrm loc}([0,\infty),W_{\Div}^{-2,2}(\mt))\cap C_{w,\mathrm loc}([0,\infty);L^{2}_{\Div}(\mt))\cap L^2_{\mathrm loc}(0,\infty;W^{1,2}_{\Div}(\mt));$$
\item $\Lambda=\prst\circ \big(\bfv(0) \big)^{-1}$;
%\item $\varPhi(\varrho,\varrho\bfv)\in L^2(\Omega\times[0,T],\mathcal{P},\dif\prst\otimes\dif t;L_2(\mathfrak{U};W^{-l,2}(\mt)))$ for some $l>\frac{3}{2}$,
\item For all
 $\bfvarphi\in C^\infty_{\Div}(\mt)$ and all $t\geq0$ there holds $\prst$-a.s.
\begin{align*}
\int_{\mt}\bfu(t)\cdot\bfvarphi\dx&=\int_{\mt}\bfu(0)\cdot\bfvarphi\dx+\int_0^t\int_{\mt}\bfu\otimes\bfu:\nabla\bfvarphi\dx\,\dif s\\
&-\mu\int_0^t\int_{\mt}\nabla\bfu:\nabla\bfvarphi\dx\,\dif s+\int_0^t\int_{\mt}\bfvarphi\cdot\varPhi\dx\,\dif W;
\end{align*}
\item 
%There exists a real valued $(\mf_t)$-martingale $M$ satisfying, for all $p\in[1,\infty)$,
%\begin{align*}
%\E \sup_{0\leq t\leq T}|M|^p\leq C \bigg(1+\E\bigg[\sup_{0\leq t\leq T}\int_{\mt}|\bfv(t)|^2\dx\bigg]^p\bigg),
%\end{align*}
The energy inequality holds in the sense that
\begin{align} \label{N3a}
\begin{aligned}
E_t+
\int_s^t\int_{\mt} &|\nabla \bfu|^2 \dx  \ds 
\\
& \leq E_s+\frac{1}{2} \int_s^t  \|\Phi\|_{L_2((\mathfrak U,L^2(\mt)))}^2   \ds
+ \int_s^t \int_{\mt} \bfu\cdot\Phi \, {\rm d}W
\end{aligned}
\end{align}
$\mathbb P$-a.s. for a.a. $s\geq0$ (including $s=0$) and all $\geq s$, where $E_t=\frac{1}{2}\int_ {\mt}|\bfu(t)|^2\dx $.
\end{enumerate}
\end{definition}
Definition \ref{def:sol} is standard in the theory of stochastic Navier--Stokes equations and can be found in a similar form, for instance, in \cite{FlGa} or \cite{FlaRom}. The energy inequality in (f) is in the spirit of \cite{FlaRom}, but slightly differs and is reminiscent of the recent result for compressible fluids from \cite{BFHbook}. Formerly, one can easily derive it by applying It\^{o}'s formula to the functional $t\mapsto\frac{1}{2}\int_ {\mt}|\bfu(t)|^2\dx$. It can be made rigorous on the Galerkin level (even with equality).
%. In the limit procedure once can argue, for instance, similarly to \cite[Lemma 4.3.16]{BFHbook}.
Consequently, the following existence theorem for \eqref{eq:SNS} holds.
\begin{theorem}\label{thm:NS}
Assume that we have
\begin{align*}
\int_{L^2_{\Div}(\mt)}\|\bfw\|^{p}_{L^2_x}\,\dd\Lambda(\bfw)<\infty
\end{align*}
for some $p>2$.
Then there is a martingale solution to \eqref{eq:SNS} in the sense of Definition \ref{def:sol}.
\end{theorem}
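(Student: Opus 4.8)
The plan is to construct the martingale solution to \eqref{eq:SNS} by the standard Galerkin approximation combined with stochastic compactness, following closely the now-classical framework (e.g.\ \cite{FlGa}, \cite{BFHbook}). First I would fix an orthonormal basis $(\bfpsi_i)_{i\in\N}$ of $L^2_{\Div}(\mt)$ consisting of smooth eigenfunctions of the Stokes operator and look for approximate solutions $\bfu_N=\sum_{i=1}^N c_i^N(t)\bfpsi_i$ solving the projected finite-dimensional system of It\^o SDEs obtained by testing \eqref{eq:SNS} with $\bfpsi_i$, $i\le N$, with initial datum $\bfu_N(0)=P_N\bfu_0$ where $\bfu_0\distr\Lambda$. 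Since the drift is locally Lipschitz (the nonlinearity is a polynomial in the coefficients) and the diffusion coefficient $P_N\Phi$ is bounded, a unique local strong solution exists; to obtain global existence one applies It\^o's formula to $|\bfu_N|^2_{L^2_x}$, which yields the a priori bound
\begin{align}\label{eq:gal-energy}
\tfrac12\|\bfu_N(t)\|_{L^2_x}^2+\mu\int_0^t\|\nabla\bfu_N\|_{L^2_x}^2\ds=\tfrac12\|P_N\bfu_0\|_{L^2_x}^2+\tfrac12\int_0^t\|P_N\Phi\|_{L_2}^2\ds+\int_0^t\int_{\mt}\bfu_N\cdot P_N\Phi\,\dd W,
\end{align}
using that the nonlinear term drops out by antisymmetry and incompressibility. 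Taking expectation, using the Burkholder--Davis--Gundy inequality on the martingale term and Gronwall, gives $\E\sup_{t\le T}\|\bfu_N(t)\|_{L^2_x}^2+\E\int_0^T\|\nabla\bfu_N\|_{L^2_x}^2\ds\le C$ uniformly in $N$; raising \eqref{eq:gal-energy} to the power $p/2$ and using the moment assumption on $\Lambda$ propagates a $p$-th moment bound, $\E\sup_{t\le T}\|\bfu_N(t)\|_{L^2_x}^p\le C$.

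Next I would establish tightness of the laws of $(\bfu_N,W)$ on a suitable path space. From the uniform $L^\infty_tL^2_x\cap L^2_tW^{1,2}_x$ bounds and the equation one gets, for any $\alpha\in(0,1/2)$, a uniform bound on $\bfu_N$ in $W^{\alpha,2}(0,T;W^{-k,2}(\mt))$ for some $k$ (the stochastic integral term is handled by the standard fractional-in-time estimate for It\^o integrals, the deterministic terms by the dual bounds on $\mu\Delta\bfu_N$ and the convective term). Interpolating with the $L^2_tW^{1,2}_x$ bound via Aubin--Lions--Simon yields tightness in $L^2(0,T;L^2_{\Div}(\mt))$; one also gets tightness in $C_{w}([0,T];L^2_{\Div}(\mt))$ from the uniform $L^\infty_tL^2_x$ and equicontinuity in a weaker norm, and tightness of $W$ in $C([0,T];\mathfrak U_0)$ is trivial (the law is fixed). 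Applying the Jakubowski--Skorokhod representation theorem (as the relevant spaces are quasi-Polish) — or the classical Skorokhod theorem on the Polish pieces — produces a new probability space $(\tilde\Omega,\tilde\mf,\tilde\p)$ and random variables $(\tilde\bfu_N,\tilde W_N)$ with the same laws, converging $\tilde\p$-a.s.\ to $(\tilde\bfu,\tilde W)$ in the respective topologies. I would then define $(\tilde\mf_t)$ as the (augmented, right-continuous) filtration generated by $\tilde\bfu$ and $\tilde W$ (this is where the random-distribution / history machinery of Section \ref{sec:prelimsstoch} is convenient for ensuring $\tilde W$ remains a cylindrical Wiener process and the solution is adapted).

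The final step is to pass to the limit $N\to\infty$ in the weak formulation to identify $((\tilde\Omega,\tilde\mf,(\tilde\mf_t),\tilde\p),\tilde\bfu,\tilde W)$ as a martingale solution. The linear terms pass to the limit by the a.s.\ strong $L^2_tL^2_x$ and weak $L^2_tW^{1,2}_x$ convergence; the convective term $\bfu_N\otimes\bfu_N$ converges to $\bfu\otimes\bfu$ in $L^1$ using the strong $L^2_tL^2_x$ convergence (plus the uniform higher-moment bound to secure uniform integrability and rule out concentration — here there is genuinely no Young measure needed since we are at fixed viscosity and have the $W^{1,2}_x$ estimate). The stochastic integral is identified in the standard way: one shows the quadratic-variation / martingale characterisation passes to the limit, e.g.\ by checking that $\tilde M_N(t):=\langle\tilde\bfu_N(t),\bfvarphi\rangle-\langle\tilde\bfu_N(0),\bfvarphi\rangle-\int_0^t(\dots)\ds$ is a martingale with the right cross-variations with $\tilde W_N$ and with itself, then passing to the limit using uniform integrability from the $p$-th moment bound, and invoking a representation lemma to write the limit martingale as $\int_0^t\langle\Phi\,\dd\tilde W,\bfvarphi\rangle$. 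Finally the energy inequality \eqref{N3a}: from \eqref{eq:gal-energy} (which holds with equality for $\bfu_N$, and $\ge$ after including the full $\Phi$ in place of $P_N\Phi$ up to a vanishing error) I would pass to the limit, using lower semicontinuity of $\bfw\mapsto\|\bfw\|_{L^2_x}^2$ and of $\bfw\mapsto\int_s^t\|\nabla\bfw\|^2$ under weak convergence to get ``$\le$'' at a.e.\ $s$ and all $t\ge s$ — including $s=0$ because $\tilde\bfu_N(0)=P_N\tilde\bfu_0\to\tilde\bfu_0$ strongly in $L^2_x$. The main obstacle I anticipate is the careful identification of the stochastic integral on the new probability space together with verifying that $\tilde W$ is an $(\tilde\mf_t)$-cylindrical Wiener process and the measurability/adaptedness bookkeeping for the a.e.-in-time-defined objects — this is precisely what Proposition \ref{RDT3} and the random-distribution formalism are designed to streamline; the analytic compactness and passage to the limit in the PDE terms are comparatively routine at fixed $\mu>0$.
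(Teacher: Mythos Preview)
Your proposal is correct and follows exactly the standard Galerkin--compactness scheme that the paper has in mind: the paper does not give its own proof of Theorem~\ref{thm:NS} but treats it as known, citing \cite{FlGa,FlaRom,BFHbook} and remarking that the energy inequality \eqref{N3a} ``can be made rigorous on the Galerkin level (even with equality)''. Your sketch is precisely the content of those references, including the derivation of \eqref{N3a} from the exact Galerkin energy identity via weak lower semicontinuity.
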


\section{Dissipative solutions}
\label{sec:DMS}

In this section we formalise the concept of dissipative solutions to the stochastic Euler equations
and prove their existence. The equations of interest read as 
\begin{align}\label{eq:Euler}
\left\{\begin{array}{rc}
\dd\bfu=-(\nabla\bfu)\bfu\dt-\nabla\pi\dt+\Phi\dd W
& \mbox{in $Q$,}\\
\Div \bfu=0\qquad\qquad\qquad\qquad\qquad\,\,\,\,& \mbox{in $Q$,}\\
%\bfu=0\qquad\qquad\qquad\qquad\,\,\,\,\quad\quad& \mbox{ \,on $\partial \mathcal O$.}\\
%\bfu(0)=\bfu_0\,\qquad\qquad\qquad\qquad\qquad&\mbox{ \,in $\mathcal O$,}
\end{array}\right.
\end{align}
Here $W$ is a cylindrical Wiener process as introduced in Section \ref{subsec:prob}.
Given an initial law $\Lambda$ on $L^2_{\Div}(\mt)$
a martingale solution to \eqref{eq:Euler} consists of a probability space
$(\Omega,\mf,(\mf_t),\prst)$ an $(\mf_t)$-cylindrical Wiener process and the random variables $(\bfu,\mathcal V)$.
The law $\mathcal L[\bfu(0),\bfu,\mathcal V,W]$ of $[\bfu(0),\bfu,\mathcal V,W]$ is a measure on the path path space
\begin{align}\label{eq:X}
\begin{aligned}
 \mathcal X&:= L^2_{\Div}(\mt)\times C_{\mathrm loc}([0,\infty);W^{-4,2}_{\Div}(\mt))\cap C_{w,\mathrm loc}([0,\infty);L^2_{\Div}(\mt))\\&\qquad\qquad\otimes Y_2^{\mathrm loc}(Q_\infty)\otimes C_{\mathrm loc}([0,\infty),\mathfrak U_0).
 \end{aligned}
 \end{align}
It is equipped with the $\sigma$-field
\begin{align}\label{eq:BX}\begin{aligned}
\mathscr B_{\mathcal X}&:=\mathscr B(L^2_{\Div}(\mt))\otimes \mathscr B_{\bfu}^{\mathrm loc}\otimes \mathscr B_Y^{\mathrm loc}\otimes \mathscr B(C_{\mathrm loc}([0,\infty),\mathfrak U_0)),\\
\mathscr B_{\bfu}^{\mathrm loc}&:=\sigma\big(\mathscr B(C_{\mathrm loc}([0,\infty);W^{-4,2}_{\Div}(\mt))\cap\mathscr B_\infty(C_{w,\mathrm loc}([0,\infty);L^2_{\Div}(\mt))\big),
\end{aligned}
\end{align}
where $\mathscr B_Y^{\mathrm loc}$ is defined in accordance with \eqref{eq:sigmaY}. For a Polish space $\mathscr Y$ we denote by $\mathscr{B}(\mathscr Y)$ its Borel $\sigma$-field and for a Banach space $X$ we denote by $\mathscr{B}_\infty(C_{w,\mathrm loc}([0,\infty);X))$ the $\sigma$-field generated by the mappings
$$C_{w,\mathrm loc}([0, \infty);X)\to X,\quad h\mapsto h(s),\quad s\geq0.$$
\begin{definition}[Dissipative Solution]\label{def:soleuler}
Let $\Lambda$ be a Borel probability measure on $L^2_{\Div}(\mt)$ and let $\Phi\in L_2(\mathfrak U;L^2(\mt))$. Then
$$\big((\Omega,\mf,(\mf_t),\prst),\bfu,\mathcal V,W)$$
is called a dissipative martingale solution to \eqref{eq:Euler} with the initial data $\Lambda$ provided
\begin{enumerate}[(a)]
\item $(\Omega,\mf,(\mf_t),\prst)$ is a stochastic basis with a complete right-continuous filtration;
\item $W$ is an $(\mf_t)$-cylindrical Wiener process;
\item The velocity field $\bfu$ is $(\mf_t)$-adapted and satisfies $\prst$-a.s.
 $$\bfu\in C_{\mathrm loc}([0,\infty),W^{-4,2}_{\Div}(\mt))\cap L^\infty_{\mathrm loc}(0,\infty;L^{2}_{\Div}(\mt));$$
  \item $\mathcal V=(\nu_{t,x},\nu^\infty_{t,x},\lambda)$ is $(\mf_t)$-adapted, we have $\mathcal V\in \bfY_2^{\mathrm loc}(Q_\infty,\R^3)$ $\prst$-a.s. and $\lambda=\lambda_t\otimes\mathcal L^1$ with $\lambda_t\in L^\infty_{w^*}(0,T;\mathscr M^+(\mt))$ $\prst$-a.s.;
  \item We have $\bfu(t,x)=\langle\nu_{t,x},\bfxi\rangle$ $\p$-a.s. for a.e. $(t,x)\in Q_\infty$;
\item $\Lambda=\prst\circ \big(\bfu(0) \big)^{-1}$ and $\mathcal L[\bfu(0),\mathcal V,\bfu,W]$ is a Radon measure on $(\mathcal X,\mathscr B_{\mathcal X})$;
%\item $\varPhi(\varrho,\varrho\bfv)\in L^2(\Omega\times[0,T],\mathcal{P},\dif\prst\otimes\dif t;L_2(\mathfrak{U};W^{-l,2}(\mt)))$ for some $l>\frac{3}{2}$,
\item For all
 $\bfvarphi\in C^\infty_{\Div}(\mt)$ and all $t\geq0$ there holds $\prst$-a.s.
 \begin{align}\label{eq:momentum}
 \begin{aligned}
\int_{\mt}\bfu(t)\cdot\bfvarphi\dx&=\int_{\mt}\bfu(0)\cdot\bfvarphi\dx+\int_0^t\int_{\mt}\big\langle\nu_{t,x},\bfxi\otimes\bfxi\big\rangle:\nabla\bfvarphi\dx\,\dif s\\&+\int_{(0,t)\times\mt}\big\langle\nu_{t,x}^\infty,\bfxi\otimes\bfxi\big\rangle:\nabla\bfvarphi\,\dif \lambda\
+\int_0^t\int_{\mt}\bfvarphi\cdot\varPhi\dx\,\dif W;
\end{aligned}
\end{align}
\item 
%There exists a real valued $(\mf_t)$-martingale $M$ satisfying, for all $p\in[1,\infty)$,
%\begin{align*}
%\E \sup_{0\leq t\leq T}|M|^p\leq C \bigg(1+\E\bigg[\sup_{0\leq t\leq T}\int_{\mt}|\bfv(t)|^2\dx\bigg]^p\bigg),
%\end{align*}
The energy inequality holds in the sense that
\begin{align} \label{N3}
\begin{aligned}
E_{t^+}\leq E_{s^-} +\frac{1}{2} \int_s^t  \|\Phi\|_{L_2((\mathfrak U,L^2(\mt)))}^2   \ds
+ \int_s^t  \int_{\mt} \bfu\cdot\Phi\dx \, {\rm d}W
\end{aligned}
\end{align}
 $\mathbb P$-a.s. for all $0\leq s<t$,
%for all $\phi\in C^\infty_c([0, \infty))$, $\phi \geq 0$ $\mathbb P$-a.s.,
%\begin{align}\label{energyeuler}
%\begin{aligned}
%\frac{1}{2}\int_ {\mt}\frac{1}{2}\big\langle\nu_{t,x},|\bfxi|^2\big\rangle\dx+\frac{1}{2}\lambda_t(\mt) &\leq \frac{1}{2}\int_{\mt}|\bfu(0)|^2\dx+\,\int_{\mt}\int_0^t\bfu\cdot\Phi\dd W\dx\\&+\frac{1}{2}\int_0^t\|\Phi\|_{L_2(\mathfrak U,L^2(\mt))}^2 \,\dd t,
%\end{aligned}
%\end{align}
%\begin{align} \label{N3}
%\begin{aligned}
%-  \int_0^\infty \partial_t \phi E_t\dt &- \phi(0) E_0
%\\
%& \leq \frac{1}{2} \int_0^\infty \phi  \|\Phi\|_{L_2((\mathfrak U,L^2(\mt)))}^2   \dt
%+ \int_0^\infty \phi  \int_{\mt} \bfu\cdot\Phi\dx \, {\rm d}W
%\end{aligned}
%\end{align}
%for all $\phi\in C^\infty_c([0, \infty))$, $\phi \geq 0$ $\mathbb P$-a.s.,
%\begin{equation} \label{N3}
%\begin{split}
%\frac{1}{n} \left[ \mathbb{E} \Big[ \mathbf 1_{\mathfrak{U}} {E}^n \Big] \right]_{t = \tau_1}^{t = \tau_2}
%&\leq \frac{2n-1}{2}\mathbb{E} \left[ \mathbf 1_{\mathfrak{U}} \int_{\tau_1}^{\tau_2} {E}^{n-1} \|\Phi\|_{L_2(\mathfrak U,L^2(\mt))}^2\dt                           \right]
%\end{split}
%\end{equation}
%holds for any $n = 0,1,\dots$, a.a. $\tau_2 \geq 0$ and a.a. $\tau_1$, $0 \leq \tau_1 \leq \tau_2$, including $\tau_1 = 0$, and any
%$\mathfrak{U} \in \sigma(\bfr_{\tau_1}\bfu,\bfr_{\tau_1}\mathcal V\big)$. Here 
where $E_t=\frac{1}{2}\int_ {\mt}\big\langle\nu_{t,x},|\bfxi|^2\big\rangle\dx+\frac{1}{2}\lambda_t(\mt) $ for $t\geq 0$ with $\lambda=\lambda_t\otimes \mathcal L^1$ and $E_{0^-}=\frac{1}{2}\int_{\mt}|\bfu(0)|^2\dx$. 
%In particular, we have
%\begin{align*}
%\E[E_t]\leq\,\frac{1}{2}\E\int_{\mt}|\bfu(0)|^2\dx+ \frac{1}{2}\E\int_0^t\|\Phi\|_{L_2(\mathfrak U,L^2(\mt))}^2\ds
%\end{align*}
%for all $t>0$.
%
%\begin{equation}\label{energy0}
%\begin{split}
%&\stred\bigg[\sup_{0\leq t\leq T}\int_ {\mt} \Big(\frac{1}{2}\big| \bfv(t)\big|^2+\nu |\nabla \bfv |^2\Big)\dx\,\dif s\bigg]^p\\
%& \leq \,C(p)\,\stred\bigg[\int_{\mt} \Big(\frac{1}{2} |\bfv(0)|^2 \dif x+1\bigg]^p.
%\end{split}
%\end{equation}
\end{enumerate}
\end{definition}
\begin{remark}\label{remark:energy}
Some remark concerning the energy inequality
\eqref{N3} are in order. At first glance it is not clear why the left- and right-sided limits
\begin{align*}
E_{t^+}=\lim_{\tau \searrow t}E_\tau,\quad E_{t^-}=\lim_{\tau \nearrow t}E_\tau
\end{align*}
exists in any time-point. Initially, we only show that
\begin{align*}
E_{t}\leq E_{s} +\frac{1}{2} \int_s^t  \|\Phi\|_{L_2((\mathfrak U,L^2(\mt)))}^2   \ds
+ \int_s^t  \int_{\mt} \bfu\cdot\Phi\dx \, {\rm d}W
\end{align*}
 $\mathbb P$-a.s. for a.a. $0<s<t$, see \eqref{eq:1607}. This, however, implies that the mapping
  \begin{align*}
t\mapsto  E_t  -\int_0^t \|\Phi\|_{L_2((\mathfrak U,L^2(\mt)))}^2   \ds
- \int_0^t  \int_{\mt} \bfu\cdot\Phi\dx \, {\rm d}W
\end{align*}
is non-increasing. Since it is also  pathwise bounded, left- and right-sided limits exist in all points. Furthermore,
$\int_0^\cdot \|\Phi\|_{L_2((\mathfrak U,L^2(\mt)))}^2   \dt$ and $\int_0^\cdot  \int_{\mt} \bfu\cdot\Phi\dx \, {\rm d}W$
are continuous such that left- and right-sided limits also exists for $E_t$. Finally, we obtain
$E_{t^+}\leq E_{t^-}$, such that there could be energetic sinks but no positive jumps in the energy.
\end{remark}

The main result of this section concerns the existence of a dissipative solution in the sense of Definition \ref{def:soleuler}.
\begin{theorem}\label{thm:main1}
Assume that we have
\begin{align*}
\int_{L^2_{\Div}(\mt)}\|\bfw\|^{p}_{L^2_x}\,\dd\Lambda(\bfw)<\infty
\end{align*}
for some $p>2$.
Then there is a dissipative martingale solution to \eqref{eq:Euler} in the sense of Definition \ref{def:soleuler}.
\end{theorem}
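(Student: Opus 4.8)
\textbf{Proof proposal for Theorem \ref{thm:main1}.}

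The plan is to construct the dissipative martingale solution as a vanishing viscosity limit of the stochastic Navier--Stokes equations \eqref{eq:SNS} with viscosity $\mu=\mu_n\to 0$. First I would invoke Theorem \ref{thm:NS} to obtain, for each $n$, a finite energy weak martingale solution $((\Omega_n,\mf_n,(\mf_t^n),\prst_n),\bfu_n,W_n)$ to \eqref{eq:SNS} with viscosity $\mu_n$ and initial law $\Lambda$. The energy inequality \eqref{N3a}, together with the moment assumption $\int\|\bfw\|_{L^2_x}^p\dd\Lambda<\infty$ for some $p>2$ and the Burkholder--Davis--Gundy inequality, yields uniform bounds (independent of $n$) for $\E\big[\sup_{t\le T}\|\bfu_n(t)\|_{L^2_x}^p\big]$ and $\E\big[\mu_n\int_0^T\|\nabla\bfu_n\|_{L^2_x}^2\dt\big]^{p/2}$ on every finite interval $[0,T]$. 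From the momentum equation one derives a uniform H\"older-type bound for $t\mapsto\bfu_n(t)$ in a negative Sobolev space $W^{-4,2}_{\Div}(\mt)$, which together with the $L^\infty_t L^2_x$ bound gives tightness of the laws of $\bfu_n$ in $C_{\mathrm loc}([0,\infty);W^{-4,2}_{\Div}(\mt))\cap C_{w,\mathrm loc}([0,\infty);L^2_{\Div}(\mt))$.

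Next I would attach to each $n$ the generalised Young measure $\mathcal V_n:=(\delta_{\bfu_n(t,x)},0,0)$ (or rather the one generated by $\bfu_n\otimes\bfu_n$) as an element of $Y_2^{\mathrm loc}(Q_\infty)$; by \eqref{eq:L2Y'} and the uniform $L^2$-bounds the laws of $\mathcal V_n$ are supported in a fixed $\sigma$-compact subset of the quasi-Polish space $Y_2(Q_T)$ for each $T$, hence tight. The driving noises $W_n$ have fixed law in $C_{\mathrm loc}([0,\infty);\mathfrak U_0)$, and $\bfu_n(0)$ has fixed law $\Lambda$. Thus the joint laws of $(\bfu_n(0),\bfu_n,\mathcal V_n,W_n)$ on $\mathcal X$ are tight. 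Since $\mathcal X$ is a product of quasi-Polish spaces, I would apply Jakubowski's generalisation of the Skorokhod representation theorem to pass to a new probability space $(\tilde\Omega,\tilde\mf,\tilde\prst)$ carrying random variables $(\tilde\bfu_n,\tilde{\mathcal V}_n,\tilde W_n)$ with the same laws and converging $\tilde\prst$-a.s. in $\mathcal X$ to a limit $(\tilde\bfu,\tilde{\mathcal V},\tilde W)$. One then defines the filtration $(\tilde\mf_t)$ as the (augmented, right-continuous) history generated by $\tilde\bfu$, $\tilde{\mathcal V}$, $\tilde W$, checks via Proposition \ref{RDT3} (applied componentwise after testing against a basis) that $\tilde W$ remains a cylindrical $(\tilde\mf_t)$-Wiener process and that the transformed equation still holds, and identifies $\bfu(t,x)=\langle\nu_{t,x},\bfxi\rangle$ a.e. as the weak-$L^2$ limit.

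The passage to the limit in the momentum equation \eqref{eq:momentum} is then essentially a soft argument: the viscous term $\mu_n\int\nabla\tilde\bfu_n:\nabla\bfvarphi$ vanishes because $\mu_n\to0$ while $\mu_n\|\nabla\tilde\bfu_n\|^2$ is bounded in expectation; the convective term $\tilde\bfu_n\otimes\tilde\bfu_n$ converges, after testing against $\nabla\bfvarphi\in\mathcal G_2(Q_T)$, to $\langle\nu_{t,x},\bfxi\otimes\bfxi\rangle\dxt+\langle\nu^\infty_{t,x},\bfxi\otimes\bfxi\rangle\dd\lambda$ by the very definition of weak* convergence in $Y_2(Q_T)$; and the stochastic integral converges by the standard martingale-identification argument (or by the Skorokhod representation applied to the integral itself). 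The genuinely delicate point --- and the part I expect to be the main obstacle --- is the energy inequality \eqref{N3} for \emph{all} $0\le s<t$ with the correct left/right limits $E_{t^+}\le E_{s^-}$, rather than merely for a.e.\ $s,t$ as one gets directly from lower semicontinuity of $E$ under weak convergence. One first obtains \eqref{eq:1607}, i.e.\ the inequality for a.e.\ $s<t$, by passing to the limit in \eqref{N3a} (the dissipation term $\int_s^t\|\nabla\tilde\bfu_n\|^2\dd\sigma\ge0$ is simply dropped, and $E_s$, $E_t$ are controlled by weak lower semicontinuity in the liminf, taking $s$ outside a null set); then, as explained in Remark \ref{remark:energy}, one uses that $t\mapsto E_t-\tfrac12\int_0^t\|\Phi\|_{L_2}^2\dd\sigma-\int_0^t\int_{\mt}\bfu\cdot\Phi\dx\,\dd W$ is $\prst$-a.s.\ non-increasing and pathwise bounded, so its one-sided limits exist everywhere; combining this with the continuity of the two subtracted terms and passing to the limit along suitable sequences $s_k\nearrow s$, $t_k\searrow t$ promotes the a.e.\ inequality to the everywhere inequality with the right-/left-sided limits. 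Care is needed to ensure the redefinition of $E$ on the exceptional null set of times is consistent with the measurability and adaptedness requirements (d), (e); this is exactly the point flagged in Section \ref{subsec:new} of the paper, and I would follow the stochastic-compactness refinement indicated there to handle it cleanly. The remaining conditions (a)--(f) of Definition \ref{def:soleuler} are then verified routinely: (a), (b) from the construction of the filtration and Proposition \ref{RDT3}; (c) from the uniform bounds; (d) from the definition of $\tilde{\mathcal V}_n$ and a.s.\ convergence; (e) from the weak-$L^2$ limit identification; (f) from equality of laws under the Skorokhod representation.
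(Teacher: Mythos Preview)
Your proposal follows the same vanishing-viscosity route as the paper and is essentially correct. Two points deserve comment, both concerning the energy inequality. First, the passage to the a.e.\ inequality \eqref{eq:1607} cannot be done by ``weak lower semicontinuity in the liminf'' as you describe: lower semicontinuity would give a lower bound on $E_t$ (fine for the left-hand side) but no upper bound on $E_s$ (needed on the right-hand side), and in any case the Young-measure convergence is only in space-time, not at fixed times. The paper's actual device is to average the approximate inequality over short time windows around both $s$ and $t$ (equation \eqref{eq:1907}); the averaged energy terms are then genuinely continuous under weak* convergence in $Y_2(Q_T)$, so one can pass $m\to\infty$ and afterwards send the averaging parameter $\varrho\to 0$ at Lebesgue points. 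Since you defer to Section~\ref{subsec:new} for the details, this is a misdescription rather than a gap. Second, you list condition (d) --- in particular the disintegration $\lambda=\lambda_t\otimes\mathcal L^1$ with $\lambda_t\in L^\infty_{w^*}(0,T;\mathscr M^+(\mt))$ --- as routine, but it does not follow from a.s.\ Young-measure convergence alone; in the paper it falls out of the same averaging step, since uniform boundedness of $\frac{1}{\varrho}\tilde{\E}[\tilde\lambda((t-\varrho,t)\times\mt)]$ in $\varrho$ forces absolute continuity of $\tilde\lambda$ with respect to Lebesgue measure in time.
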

As a by-product of our proof, in which we approximate \eqref{eq:Euler} by a sequence of solutions to
\eqref{eq:SNS} with vanishing viscosity, we obtain the following result.
\begin{corollary}\label{cor:main}
Let $\Lambda$ be a given Borel probability measure on $L^2(\mt)$ such that
\begin{align*}
\int_{L^2_{\Div}(\mt)}\|\bfw\|^{p}_{L^2_x}\,\dd\Lambda(\bfw)<\infty
\end{align*}
for some $p>2$.
If
$\big((\Omega^\varepsilon,\mf^\varepsilon,(\mf^\varepsilon),\prst^\varepsilon),\bfu^\varepsilon,W^\varepsilon\big)$ is a finite energy weak martingale solution to \eqref{eq:SNS} in the sense of Definition \ref{def:sol} with the initial law $\Lambda$, then \db{there is a subsequence such that} 
\begin{align*}
\bfu_\varepsilon&\rightarrow\bfu\quad\text{in law on}\quad  C_{w,\mathrm loc}([0,\infty);L^{2}_{\Div}(\mt)),
\end{align*}
where $\bfu$ is a dissipative solution to \eqref{eq:Euler} in the sense of Definition \ref{def:soleuler} with the initial law $\Lambda$.
\end{corollary}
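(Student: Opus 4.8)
The plan is to prove the (stronger) existence result Theorem~\ref{thm:main1}, the corollary being the $\bfu$-marginal of its proof. I would start from a vanishing sequence $\eps=\eps_n\searrow0$ and, for each $n$, a finite energy weak martingale solution $((\Omega^\eps,\mf^\eps,(\mf^\eps_t),\prst^\eps),\bfu^\eps,W^\eps)$ of \eqref{eq:SNS} with viscosity $\mu=\eps$ and initial law $\Lambda$ (such solutions exist by Theorem~\ref{thm:NS}; in the corollary they are prescribed). Raising the energy inequality \eqref{N3a} to the power $p/2$, using the Burkholder--Davis--Gundy inequality and the moment hypothesis on $\Lambda$, one obtains the $\eps$-uniform bounds $\E^\eps[\sup_{0\le t\le T}\|\bfu^\eps(t)\|_{L^2_x}^p]\le c(T)$ and $\E^\eps[\eps\int_0^T\|\nabla\bfu^\eps\|_{L^2_x}^2\ds]\le c(T)$ for every $T>0$; note that the dissipation $\eps\int_0^T\|\nabla\bfu^\eps\|_{L^2_x}^2\ds$ only stays $O(1)$, while $\eps^{1/2}\nabla\bfu^\eps$ is bounded in $L^2_{t,x}$. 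From the momentum equation one further deduces an $\eps$-uniform fractional H\"older bound in time for $\bfu^\eps$ with values in $W^{-4,2}_{\Div}(\mt)$, and by the compactness criterion for sequences bounded in $L^\infty(0,T;L^2_x)$ and equi-continuous into $W^{-4,2}_x$, the laws of $\bfu^\eps$ are tight on $C_{\mathrm{loc}}([0,\infty);W^{-4,2}_{\Div}(\mt))\cap C_{w,\mathrm{loc}}([0,\infty);L^2_{\Div}(\mt))$.

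Next I would adjoin the quantities that must survive the limit. First, $\mathcal V^\eps:=(\delta_{\bfu^\eps(t,x)},0,0)\in Y_2^{\mathrm{loc}}(Q_\infty)$: by \eqref{eq:L2Y'} and Markov's inequality for $\|\bfu^\eps\|_{L^2(Q_T)}$, each sublevel set of that norm lies in a weak* compact subset of $Y_2(Q_T)$, so the laws of $\mathcal V^\eps$ are tight on $Y_2^{\mathrm{loc}}(Q_\infty)$. Second, \eqref{N3a} says that $G^\eps_t:=\tfrac12\|\bfu^\eps(t)\|_{L^2_x}^2+\eps\int_0^t\|\nabla\bfu^\eps\|_{L^2_x}^2\ds-\tfrac12\int_0^t\|\Phi\|_{L_2}^2\ds-M^\eps_t$, with $M^\eps_t:=\int_0^t\int_{\mt}\bfu^\eps\cdot\Phi\dx\,{\rm d}W^\eps$, is $\prst^\eps$-a.s.\ non-increasing and pathwise bounded; hence the energy $E^\eps_t:=\tfrac12\|\bfu^\eps(t)\|_{L^2_x}^2$ and the dissipation $D^\eps_t:=\eps\int_0^t\|\nabla\bfu^\eps\|_{L^2_x}^2\ds$ (a non-decreasing bounded process) are, by Helly's selection theorem and the H\"older bound on $M^\eps$, tight on $L^1_{\mathrm{loc}}([0,\infty))$. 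Since the law of $\bfu^\eps(0)$ is $\Lambda$ (tight on $L^2_{\Div}(\mt)$ in the strong topology) and the law of $W^\eps$ is fixed, the joint laws of $(\bfu^\eps(0),\bfu^\eps,\mathcal V^\eps,E^\eps,D^\eps,W^\eps)$ are tight on the product space, which is only quasi-Polish because of the factor $Y_2^{\mathrm{loc}}(Q_\infty)$. I would then apply Jakubowski's version of the Skorokhod representation theorem \cite{jakubow} to pass to a new stochastic basis $(\tilde\Omega,\tilde\mf,(\tilde\mf_t),\tilde\prst)$ carrying variables with the same joint laws and converging $\tilde\prst$-a.s.\ to a limit $(\tilde\bfu(0),\tilde\bfu,\tilde{\mathcal V},\tilde E,\tilde D,\tilde W)$, with $(\tilde\mf_t)$ the augmented history \eqref{eq:2402a} of the tuple. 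Equality of laws transfers the relation $\tilde{\mathcal V}^\eps=(\delta_{\tilde\bfu^\eps(t,x)},0,0)$, the momentum equation, and \eqref{N3a}; $\tilde W$ is a cylindrical $(\tilde\mf_t)$-Wiener process by the usual martingale characterisation, and $\tilde\bfu,\tilde{\mathcal V},\tilde E$ are $(\tilde\mf_t)$-adapted.

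To identify the limit I would let $\eps\to0$ in the momentum balance for $\tilde\bfu^\eps$. The linear terms converge by the a.s.\ convergence in $C_{w,\mathrm{loc}}L^2_x$; the viscous term carries a factor $\eps^{1/2}$ against $(\tilde D^\eps_t)^{1/2}\|\nabla\bfvarphi\|_{L^2_{t,x}}$, and $\tilde D^\eps$ stays bounded along the subsequence (from $\tilde D^\eps\to\tilde D$ in $L^1_{\mathrm{loc}}$ and monotonicity), so it vanishes; the stochastic integral converges by the standard identification lemma under Skorokhod convergence (cf.\ \cite{BFHbook}). For the convective term one writes $\tilde\bfu^\eps\otimes\tilde\bfu^\eps=\langle\delta_{\tilde\bfu^\eps},\bfxi\otimes\bfxi\rangle$ and uses that each entry of $\bfxi\otimes\bfxi$ belongs to $\mathcal G_2(Q_T)$ with recession function $\bfxi\otimes\bfxi$ on $\mathbb S^2$; the weak* convergence $\tilde{\mathcal V}^\eps\rightharpoonup^\ast\tilde{\mathcal V}=(\nu_{t,x},\nu_{t,x}^\infty,\lambda)$ then gives $\tilde\bfu^\eps\otimes\tilde\bfu^\eps\dxt\rightharpoonup^\ast\langle\nu_{t,x},\bfxi\otimes\bfxi\rangle\dxt+\langle\nu_{t,x}^\infty,\bfxi\otimes\bfxi\rangle\dd\lambda$ in $\mathscr M(Q_T)$, which is exactly the term in \eqref{eq:momentum}. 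Testing instead with the linear function $\bfxi\in\mathcal G_2(Q_T)$ (recession function $0$) and comparing with the weak $L^2(Q_T)$-limit of $\tilde\bfu^\eps$ yields $\tilde\bfu(t,x)=\langle\nu_{t,x},\bfxi\rangle$ a.e., i.e.\ part (e), while $\lambda=\lambda_t\otimes\mathcal L^1$ with $\lambda_t\in L^\infty_{w^*}(0,T;\mathscr M^+(\mt))$ follows from the bound in $L^\infty_tL^2_x$. The remaining items of Definition~\ref{def:soleuler}---the path regularity of $\bfu$ into $W^{-4,2}_{\Div}(\mt)$ read off from the equation, $\bfu\in L^\infty_{\mathrm{loc}}(0,\infty;L^2_{\Div}(\mt))$, the Radon property of the law via \eqref{eq:L2Y'}, $\Lambda=\tilde\prst\circ(\tilde\bfu(0))^{-1}$, and the adaptedness and measurability of $\mathcal V$ via the random-distribution calculus of Section~\ref{sec:prelimsstoch}---are then routine, and the convergence $\bfu^\eps\to\bfu$ in law on $C_{w,\mathrm{loc}}([0,\infty);L^2_{\Div}(\mt))$ claimed in the corollary is read off from the construction.

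The energy inequality (h) is the delicate point, and the step I expect to be the main obstacle. Dropping the non-negative dissipation in \eqref{N3a} for $\tilde\bfu^\eps$ leaves $E^\eps_t\le E^\eps_s+\tfrac12\int_s^t\|\Phi\|_{L_2}^2\ds+\int_s^t\int_{\mt}\tilde\bfu^\eps\cdot\Phi\dx\,{\rm d}\tilde W^\eps$ for a.a.\ $s\ge0$ (including $s=0$) and all $t>s$, $\tilde\prst$-a.s. Since $\tilde E^\eps\to\tilde E$ in $L^1_{\mathrm{loc}}$ and the stochastic integrals converge, integrating in the time variables and passing to the limit gives the same inequality with $\tilde E$ in place of $E^\eps$. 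One then checks $\tilde E_t=E_t$ for a.e.\ $t$, where $E_t=\tfrac12\int_{\mt}\langle\nu_{t,x},|\bfxi|^2\rangle\dx+\tfrac12\lambda_t(\mt)$: testing $\tilde{\mathcal V}^\eps\rightharpoonup^\ast\tilde{\mathcal V}$ with $|\bfxi|^2\in\mathcal G_2(Q_T)$ (recession function $1$ on $\mathbb S^2$) gives $|\tilde\bfu^\eps|^2\dxt\rightharpoonup^\ast\langle\nu_{t,x},|\bfxi|^2\rangle\dxt+\lambda$, whence $\int_a^b\tilde E_t\dt=\lim_\eps\tfrac12\|\tilde\bfu^\eps\|_{L^2((a,b)\times\mt)}^2=\int_a^bE_t\dt$ for all $\lambda$-continuity intervals $(a,b)$, hence a.e. So $E_t\le E_s+\tfrac12\int_s^t\|\Phi\|_{L_2}^2\ds+\int_s^t\int_{\mt}\tilde\bfu\cdot\Phi\dx\,{\rm d}\tilde W$ for a.a.\ $0\le s<t$; the upgrade to \eqref{N3} for \emph{all} $0\le s<t$, with one-sided limits $E_{t^\pm}$ and $E_{0^-}=\tfrac12\|\tilde\bfu(0)\|_{L^2_x}^2$, is the argument of Remark~\ref{remark:energy}---the map $t\mapsto E_t-\tfrac12\int_0^t\|\Phi\|_{L_2}^2\ds-\int_0^t\int_{\mt}\tilde\bfu\cdot\Phi\dx\,{\rm d}\tilde W$ coincides a.e.\ with a bounded non-increasing function (the viscous dissipation and any concentration-driven loss being absorbed there), so it possesses one-sided limits everywhere; the case $s=0$ together with the a.s.\ strong $L^2$-convergence $\tilde\bfu^\eps(0)\to\tilde\bfu(0)$ forces $E_{0^+}\le E_{0^-}$. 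The real difficulty throughout is to carry the energy through the (necessarily Jakubowski, as $Y_2(Q_T)$ is non-separable) compactness argument so that the limiting energy is the one measured by the generalised Young measure rather than merely $\tfrac12\|\bfu\|_{L^2_x}^2$, to produce the estimate at \emph{every} time $t$ as required by the stopping-time arguments of Section~\ref{sec:weakstrong}, and to keep all objects---$\mathcal V$, $E$, and the stochastic integral in the energy identity---progressively measurable within the random-distribution framework.
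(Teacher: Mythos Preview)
Your proposal is correct and follows essentially the same route as the paper: uniform energy bounds from \eqref{N3a} via BDG and Gronwall, fractional time regularity into $W^{-3,2}_{\Div}$ from the momentum equation, tightness of $(\bfu^\eps(0),\bfu^\eps,\mathcal V^\eps,W^\eps)$ on the quasi-Polish path space, Jakubowski--Skorokhod, martingale identification of the stochastic integral, and passage to the limit in the convective term via $\bfxi\otimes\bfxi\in\mathcal G_2(Q_T)$.

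The one genuine difference is in how the energy inequality is carried through the limit. You enlarge the path space by adjoining $E^\eps$ and $D^\eps$ as separate $L^1_{\mathrm{loc}}$-valued variables, obtain their convergence from Helly-type compactness, and then identify the limit $\tilde E$ a.e.\ with the Young-measure energy by testing with $|\bfxi|^2$. The paper does not carry these extra variables: it keeps the path space as $(\bfu_0,\bfu,\mathcal V,W)$ only, transfers \eqref{N3a} to the new space by equality of laws (via Proposition~\ref{RDT3} for the stochastic integral), and then \emph{averages} the energy inequality in both $s$ and $t$ over windows of length $\varrho$ (see \eqref{eq:1907}). The point of the averaging is that $\dashint_{t-\varrho}^t\tfrac12\|\tilde\bfu^{\eps_m}(r)\|_{L^2_x}^2\,\dd r$ is a continuous functional of $\tilde{\mathcal V}^{\eps_m}$ (test with $\varphi=\tfrac{1}{\varrho}\mathbb I_{(t-\varrho,t)}$ and $f=|\bfxi|^2$), so one can pass $m\to\infty$ directly using the weak* Young-measure convergence, then let $\varrho\to0$ at Lebesgue points. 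Both approaches arrive at \eqref{eq:1607} for a.e.\ $s<t$, after which the upgrade to \eqref{N3} for all $s<t$ and the case $s=0$ proceed exactly as you describe (and as in Remark~\ref{remark:energy}). Your route is slightly more elaborate but entirely sound; the paper's averaging trick simply avoids the need to prove tightness of $E^\eps,D^\eps$ separately, since the relevant information is already encoded in $\mathcal V^\eps$.
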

The rest of this section is dedicated to the proof of Theorem \ref{thm:main1} which we split in several parts.

\subsection{A priori estimates}
For any $\varepsilon>0$ Theorem \ref{thm:NS} yields the existence of a martingale solution
$$\big((\Omega^\varepsilon,\mf^\varepsilon,(\mf^\varepsilon_t),\prst^\varepsilon),\bfu^\varepsilon,W^\varepsilon)$$
to \eqref{eq:SNS}. Without loss of generality we can assume that the probability space as well as the Wiener process
$W^\varepsilon$ do not depend on $\varepsilon$, that is the solution is given by
$$\big((\Omega,\mf,(\mf_t),\prst),\bfu^\varepsilon,W).$$
%From \eqref{N3a} we obtain
%\begin{align*}
%\E\bigg[\|\bfu_\ep(T)\|_{L^2_x}^{2n}&+\ep\int_0^T\|\bfu_\ep\|_{L^2_x}^{2n-2}\int_{\mt}|\nabla\bfu_\ep|^2\dxt\bigg]\\&\leq\,c(n)\, \E\int_0^T \|\bfu_\ep\|_{L^2_x}^{2n-2} \|\Phi\|_{L_2(\mathfrak U,L^2(\mt))}^2\dt+c(n)\bigg[\int_{\mt}|\bfu_\ep(0)|^2\dx\bigg]^n
%\end{align*}
%for all $n\in\N$ and and all $T>0$. By our assumption on the initial law the last term can be bounded by constant $c(n,\Lambda)$. On the other hand, for arbitrary $\delta>0$
%\begin{align*}
%\E\int_0^T \|\bfu_\ep\|_{L^2_x}^{2n-2} \|\Phi\|_{L_2(\mathfrak U,L^2(\mt))}^2\dt&\leq\, \E\int_0^T\big(\|\bfu_\ep\|_{L^2_x}^{2n}+1\big)\|\Phi\|_{L_2(\mathfrak U,L^2(\mt))}^2\dt
%\end{align*}
%we conclude by Gronwall's lemma
%\begin{align}\label{eq:apriori1'}
%\E\bigg[\|\bfu_\ep(t)\|_{L^2_x}^{2n}+\ep\int_0^T\|\bfu_\ep\|_{L^2_x}^{2n-2}\int_{\mt}|\nabla\bfu_\ep|^2\dxt\bigg]\leq\,c(n,T,\Lambda,\Phi)
%\end{align}
%uniformly in $\varepsilon$.
%We can use Proposition \eqref{cor:B3FR} to estimate for any $R>0$
%\begin{align}\label{eq:apriori1}
%\begin{aligned}
%R\,\p\bigg(\sup_{0<t<T}&\|\bfu_\ep(t)\|_{L^2_x}^{2n}+\ep\int_0^T\|\bfu_\ep\|_{L^2_x}^{2n-2}\int_{\mt}|\nabla\bfu^\ep|^2\dxt\geq R\bigg)\\&\leq \,2\E\bigg[\|\bfu_\ep(0)\|_{L^2_x}^{2n}+\|\bfu_\ep(T)\|_{L^2_x}^{2n}+\ep\int_0^T\|\bfu_\ep\|_{L^2_x}^{2n-2}\int_{\mt}|\nabla\bfu^\ep|^2\dxt\bigg]
%\end{aligned}
%\end{align}
%which is bounded
%by \eqref{eq:apriori1'}.
From \eqref{N3a} we obtain for any $T>0$ (choosing $\phi=\mathbb I_{(0,t)}$ and $s=0$, taking the supremum with respect to $t$, the power $p$ and applying expectations)
\begin{align*}
\E\bigg[\sup_{0<t<T}&\int_{\mt}|\bfu^\ep|^2\dx+\ep\int_0^T\int_{\mt}|\nabla\bfu^\ep|^2\dxt\bigg]^{p}\\&\leq\,c(p)\int_{L^2_{\Div}(\mt)}\|\bfw\|^{p}_{L^2_x}\,\dd\Lambda(\bfw)+c(p)\,\E\bigg[\sup_{0<t<T}\int_{\mt}\int_0^t\bfu^\varepsilon\cdot\varPhi\,\dif W\dx\bigg]^p.
\end{align*}
By Burkholer-Davis-Gundi inequality we obtain
\begin{align*}
E\bigg[\sup_{0<t<T}\int_{\mt}\int_0^t\bfu^\varepsilon\cdot\Phi \,\dif W\dx\bigg]^p&\leq\,c(p)\E\bigg[\int_0^T\sum_{k\geq1}\bigg(\int_{\mt}\Phi \bfe_k\cdot\bfu^\varepsilon\dx\bigg)^2\bigg]^{\frac{p}{2}}\\
&\leq\,c(p)\E\bigg[\|\Phi\|^2_{L_2(\mathfrak U;L^2(\mt))}\int_0^T\int_{\mt}|\bfu^\varepsilon|^2\dxt\bigg]^{\frac{p}{2}}\\
&\leq\,c(p,\Phi,T)\int_0^T\E\bigg[\int_{\mt}|\bfu^\varepsilon|^2\dx\bigg]^{\frac{p}{2}}\dt.
\end{align*}
By Gronwall's lemma we conclude
\begin{align}\label{eq:apriori1}
\E\bigg[\sup_{0<t<T}\int_{\mt}|\bfu^\ep|^2\dx+\ep\int_0^T\int_{\mt}|\nabla\bfu^\ep|^2\dxt\bigg]^{p}\leq\,c(p,\Lambda,\Phi,T)
\end{align}
uniformly in $\varepsilon$.
We have to pass to the limit in the nonlinear convective term which requires some compactness arguments.
We write the momentum equation as 
\begin{align*}
\int_{\mt}\bfu^\ep(t)\cdot\bfphi\dx
&=\int_{\mt}\bfu^\ep(0)\cdot\bfphi\dx+\int_0^t\int_{\mt} \bfH^\ep:\nabla\mathcal\bfphi\dxs+\int_0^t\int_{\mt}\bfphi\cdot\Phi\dx\,\dd W,\\
\bfH^\ep&:=-\ep\nabla\bfu^\ep+\bfu^\ep\otimes\bfu^\ep,
\end{align*}
for all $\bfphi\in C^\infty_{\Div}(\mt)$. From the a priori estimates in \ref{eq:apriori1} we obtain
\begin{align}
\label{eq:p0}\bfH^\ep\in L^{1}(\Omega;L^2(0,T;L^1(\mt))\hookrightarrow L^{1}(\Omega;L^2(0,T;W^{-2,2}(\mt))
\end{align}
uniformly in $\varepsilon$.
Let us consider the functional
$$\mathscr H^\ep(t,\bfphi):=\int_0^t\int_{\mt} \bfH^\ep:\nabla\bfphi\dxs,\quad \bfphi\in C^\infty_{\Div}(\mt),$$
which is the deterministic part of the equation.
Then we deduce from \eqref{eq:p0} the estimate
\begin{align*}
\E\bigg[\big\|\mathscr H^\ep\big\|_{W^{1,2}(0,T;W_{\Div}^{-3,2}(\mt))}\bigg]\leq c(T).
\end{align*}
For the stochastic term we have
\begin{align*}
\E\bigg[&\Big\|\int_0^{\cdot}\Phi\,\dd W\Big\|^p_{C^{\alpha}([0,T]; L^2(\mt))}\bigg]\leq \,c\,\E\bigg[\int_0^T\|\Phi\|_{L_2(\mathfrak U,L^2(\mt))}^p\dt\bigg]=c(p,\Phi,T)
\end{align*}
for all $\alpha(1/p,1/2)$ and $p>2$.
Combining the two previous estimates and using the embeddings $W^{1,2}_t\hookrightarrow C^{1/2}_t$
and $L^2_x\hookrightarrow W^{-3,2}_{x}$ shows
\begin{align}\label{eq:vt2}
\E\Big[\|\bfu^\ep\|_{C^{\alpha}([0,T]; W_{\Div}^{-3,2}(\mt))}\Big]\leq c(T)
\end{align}
for all $\alpha<\frac{1}{2}$.

\subsection{Compactness}
We aim at proving tightness of the sequence of approximate solutions using
the compact embeddings
\begin{align}\label{eq:comp}
\begin{aligned}
C^{\alpha}([0,T];W^{-3,2}_{\Div}(\mt))&\hookrightarrow \hookrightarrow C([0,T];W^{-4,2}_{\Div}(\mt)),\\
C^{\alpha}([0,T];W^{-3,2}_{\Div}(\mt))\cap L^\infty(0,T;L^2_{\Div}(\mt))&\hookrightarrow \hookrightarrow C_w([0,T];L^2_{\Div}(\mt)).
\end{aligned}
\end{align}
For $T>0$ we consider the path space
$$ \mathcal X_T:= L^2_{\Div}(\mt)\times C([0,T];W^{-4,2}_{\Div}(\mt))\cap C_w([0,T];L^2_{\Div}(\mt))\otimes Y_2(Q_T,\R^3)\otimes C([0,T],\mathfrak U_0).$$
% as well as the extended paths space 
% $$ \mathcal X^E:=\mathcal X_E\times L^\infty_{w^*,\mathrm loc}([0,T))\times C_{\mathrm loc}([0,T\infty)),\quad  \mathcal X_T^E:=\mathcal X_E\times L^\infty_{w^*}(0,T)\times C([0,T]),$$
% which also include the energy $E^{\eps_m}_t=\frac{1}{2}\int_{\mt}|\bfu^{\eps_m}|^2\dx$ and the stochastic integral
%\begin{align*}
%\mathscr M^{\eps_m}_t=\int_0^t\int_{\mt}\bfu^{\eps_m}\cdot\Phi\,\dd  W
% \end{align*} 
% appearing in the energy inequality.\\
Clearly, tightness of $\mathcal{L}[\bfu_0,\bfr_T\bfu^{\eps_m},\bfr_T{\mathcal V}^{\eps_m},\bfr_TW]$ on $\mathcal X_E$ for any $T>0$
implies tightness of $\mathcal{L}[\bfu_0,\bfu^{\eps_m},{\mathcal V}^{\eps_m},W]$ on $\mathcal X$.
Here $\bfr_T$ is the restriction operator which restricts measurable functions (or space-time distributions)
defined on $(0,\infty)$ to $(0,T)$. It acts on various path spaces.
%So we need to show tightness of the measure
%\begin{align*}
%\bfnu^N:=\nu_{\bfv^N}\otimes \nu_{\bfW}\otimes\Lambda_{0}\otimes\Lambda_{\bff}.
%\end{align*}
We fix $T>0$ and consider the ball $\mathcal B_R$ in the space $$C^{\alpha}([0,T];W^{-3,2}_{\Div}(\mt))\cap L^\infty(0,T;L^2_{\Div}(\mt)).$$
We obtain for its complement by \eqref{eq:apriori1} and \eqref{eq:vt2}
\begin{align*}
\nu_{\bfr_T\bfu^\ep}&(\mathcal B_R^C)=\p\Big(\|\bfr_T\bfu^\ep\|_{C^{\alpha}_tW^{-3,2}_x}+\|\bfr_T\bfu^\ep\|_{L^\infty_tL_x^{2}}\geq R\Big)\leq  \frac{c}{R}.
\end{align*}
So, for any fixed $\eta>0$, we find $R(\eta)$ with
\begin{align*}
\mathcal L[\bfr_T\bfu_\ep](\mathcal B_{R(\eta)})&\geq 1-\eta,
\end{align*}
i.e. $\mathcal L[\bfr_T\bfu^\ep]$ is tight. 
Now we set $\mathcal V^\ep=(\delta_{\bfu^\ep},0,0)\in Y_2^{\mathrm loc}(Q_\infty)$ as the generalised Young measure associated to $\bfu^\ep$.
Similarly to the above we have
\begin{align*}
\mathcal L[\bfr_T\bfu^\ep](\mathcal B_{R(\eta)})&\geq 1-\eta,
\end{align*}
for some $R=R(\eta)$, where $\mathcal B_{R(\eta)}$ is now the ball in $L^\infty(0,T;L^2(\mt))$.
Recalling \eqref{eq:L2Y'} 
%we use the compact embedding
%\begin{align*}
%L^2(Q_T)\hookrightarrow \big(Y_2(Q_T),\rightharpoonup^\ast\big)
%\end{align*}
%to 
we conclude tightness of $\mathcal L[\bfr_T\mathcal V^\ep]$.\\
%Tightness of $\mathcal L[\bfr_T E^{\eps_m}]$ follows from \eqref{eq:apriori1}, whereas
%tightness of $\mathcal L[\bfr_T\mathscr M^{\eps_m}]$ is a consequence of the estimate
%(where $\alpha\in(\frac{1}{p},\frac{1}{2})$)
%\begin{align*}
%\E\bigg[\|\mathscr M^{\eps_m}\|^p_{C^\alpha([0,T])}\bigg]&\leq\,c\,\E\bigg[\int_0^T\bigg\|\int_{\mt}\bfu^{\eps_m}\cdot\Phi\dx\bigg\|^p_{L_2(\mathfrak U,\R)}\dt\bigg]\\
%&\leq\,c(T,\Phi)\bigg[\sup_{0< t< T}\int_{\mt}|\bfu^{\eps_m}|^2\dx\bigg]^p,
%\end{align*}
%\eqref{eq:apriori1}, and Arcela-Ascoli's theorem.
Since also the laws $\mathcal L[\bfr_TW]$ and $\mathcal L[\bfu_0]$ are tight, as
being a Radon measures on the Polish spaces $C([0,T],\mathfrak U_0)$ and $L^2_{\Div}(\mt)$, we can conclude that
$\mathcal L[\bfu_0,\bfr_T\bfu^\ep,\bfr_T\mathcal V^\ep,\bfr_T W]$ is tight on $\mathcal X_T$.
Since $T$ was arbitrary we conclude that $\mathcal L[\bfu_0,\bfu^\ep,\mathcal V^\ep, W]$ is tight on $\mathcal X$.
Now we use Jakubowski's version of the Skorokhod
representation theorem, see \cite{jakubow},
to infer the following result.
 Let us remark that that $\mathcal{L}[\bfu_0,\bfu^{\eps_m},{\mathcal V}^{\eps_m},W]$ is a sequence of tight measures on $(\mathcal X,\mathscr B_{\mathcal X})$. Consequently, its weak* limit is tight as well and hence Radon.
%\begin{remark}
%If we replace \eqref{energy} by \eqref{N3a} we only get 
%\begin{align}\label{eq:apriori1'}
%\E\bigg[\|\bfu_\ep(t)\|_{L^2_x}^{2n}+\ep\int_0^T\|\bfu_\ep\|_{L^2_x}^{2n-2}\int_{\mt}|\nabla\bfu^\ep|^2\dxt\bigg]\leq\,c(n,\Lambda,\Phi)
%\end{align}
%for a.a. $t$. This is however enough (with $n=1$) to conclude \eqref{eq:p0} (with $p=1$) and \eqref{eq:vt2}. On the other hand we can use Proposition \ref{cor:B3FR} to estimate for any $R>0$
%\begin{align*}
%R\,\p\bigg(\sup_{0<t<T}&\|\bfu_\ep(t)\|_{L^2_x}^{2n}+\ep\int_0^T\|\bfu_\ep\|_{L^2_x}^{2n-2}\int_{\mt}|\nabla\bfu^\ep|^2\dxt\geq R\bigg)\\&\leq \,2\E\bigg[\|\bfu_\ep(0)\|_{L^2_x}^{2n}+\|\bfu_\ep(T)\|_{L^2_x}^{2n}+\ep\int_0^T\|\bfu_\ep\|_{L^2_x}^{2n-2}\int_{\mt}|\nabla\bfu^\ep|^2\dxt\bigg]
%\end{align*}
%which is bounded
%by \eqref{eq:apriori1'}. This gives again the required compactness.
%\end{remark}

\begin{proposition}\label{prop:skorokhod}
There exists a nullsequence $(\eps_m)_{m\in\N}$, a complete probability space $(\tilde\Omega,\tilde\mf,\tilde\prst)$ with $(\mathcal{X},\mathscr B_{\mathcal X})$-valued random variables $( \tilde{\bfu}^{\eps_m}_0,\tilde\bfu^{\eps_m},\tilde{\mathcal V}^{\eps_m},\tilde W^{\eps_m})$, $m\in\N$, and $({\tilde\bfu}_0,\tilde\bfu,\tilde{\mathcal{V}},\tilde W)$ such that
\begin{enumerate}
 \item[(c)] For all $m\in N$ the law of $( \tilde{\bfu}^{\eps_m}_0,\tilde\bfu^{\eps_m},\tilde{\mathcal V}^{\eps_m},\tilde W^{\eps_m})$ on $\mathcal{X}$ is given by\\ $\mathcal{L}[\bfu_0,\bfu^{\eps_m},{\mathcal V}^{\eps_m},W]$;
\item[(b)] The law of $(\tilde\bfu_0,\tilde\bfu,\tilde{\mathcal V},\tilde W)$ is a Radon measure on $(\mathcal X,\mathscr B_{\mathcal X})$;
 \item[(c)] $( \tilde{\bfu}^{\eps_m}_0,\tilde\bfu^{\eps_m},\tilde{\mathcal V}^{\eps_m},\tilde W^{\eps_m})$ converges $\,\tilde{\prst}$-almost surely to $( \tilde{\bfu}_0,\tilde\bfu,\tilde{\mathcal V},\tilde W)$ in the topology of $\mathcal{X}$, i.e.
\begin{align} \label{wWS116}
\begin{aligned}
\tilde \bfu^{\eps_m}_0 &\to \tilde\bfu_0 \quad \mbox{in}\quad L^2(\mt) \ \tilde\p\mbox{-a.s.}, \\
\tilde \bfu^{\eps_m} &\to \tilde\bfu \quad \mbox{in}\quad C_{\mathrm loc}([0,\infty);W_{\Div}^{-4,2}(\mt)) \ \tilde\p\mbox{-a.s.}, \\
\tilde \bfu^{\eps_m} &\to \tilde\bfu \quad \mbox{in}\quad C_{w,\mathrm loc}([0,\infty);L^{2}_{\Div}(\mt)) \ \tilde\p\mbox{-a.s.}, \\
\tilde{\mathcal{V}}^{\eps_m} &\rightharpoonup^\ast \tilde{\mathcal V} \quad \mbox{in}\quad Y_2^{\mathrm loc}(Q_\infty) \ \tilde\p\mbox{-a.s.}, \\
\tilde W^{\eps_m} &\to \tilde W \quad \mbox{in}\quad C_{\mathrm loc}([0,\infty); \mathfrak{U}_0 )\ \tilde\p\mbox{-a.s.}
%\tilde E^{\eps_m} &\to \tilde E \quad \mbox{in}\quad L^\infty_{w^*;\mathrm loc}([0,\infty))\ \tilde\p\mbox{-a.s.}\\
%\tilde{\mathscr M}^{\eps_m} &\to \tilde{\mathscr M} \quad \mbox{in}\quad C_{\mathrm loc}([0,\infty))\ \tilde\p\mbox{-a.s.}\\
\end{aligned}
\end{align}
\end{enumerate}
\end{proposition}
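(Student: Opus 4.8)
The statement is a direct application of Jakubowski's generalisation of the Skorokhod representation theorem \cite{jakubow} to the path space $\mathcal X$, so the plan is to verify its two hypotheses: that $\mathcal X$ is a quasi-Polish space, and that the sequence of laws $\{\mathcal L[\bfu_0,\bfu^{\eps_m},\mathcal V^{\eps_m},W]\}_{m\in\N}$ is tight on $(\mathcal X,\mathscr B_{\mathcal X})$.

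For the first point I would argue factor by factor in the product defining $\mathcal X$. The spaces $L^2_{\Div}(\mt)$, $C_{\mathrm loc}([0,\infty);W^{-4,2}_{\Div}(\mt))$ and $C_{\mathrm loc}([0,\infty);\mathfrak U_0)$ are Polish, hence quasi-Polish. The space $C_{w,\mathrm loc}([0,\infty);L^2_{\Div}(\mt))$ is quasi-Polish as well: the countable family of maps $h\mapsto \tfrac{\langle h(s),\bfphi\rangle}{1+|\langle h(s),\bfphi\rangle|}$, with $s$ ranging over $\mathbb Q\cap[0,\infty)$ and $\bfphi$ over a countable dense subset of $L^2_{\Div}(\mt)$, is continuous, separates points, and generates $\mathscr B_\infty$. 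Finally $Y_2^{\mathrm loc}(Q_\infty)$ is quasi-Polish by the discussion in Section \ref{sec:GYM}, being for each $T>0$ a weak* closed subset of the dual of the separable Banach space $\mathcal G_2(Q_T)$ and gluing together over $T\in\N$, with associated $\sigma$-field $\mathscr B_Y^{\mathrm loc}$. Since countable products of quasi-Polish spaces — with the product topology and the product $\sigma$-field generated by the separating families — are again quasi-Polish, and this $\sigma$-field is precisely $\mathscr B_{\mathcal X}$, the space $(\mathcal X,\mathscr B_{\mathcal X})$ is quasi-Polish.

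For the second point, the tightness of $\{\mathcal L[\bfu_0,\bfr_T\bfu^{\eps_m},\bfr_T\mathcal V^{\eps_m},\bfr_TW]\}_m$ on $\mathcal X_T$ has already been established above for every $T>0$ from the uniform estimates \eqref{eq:apriori1}, \eqref{eq:vt2}, the compact embeddings \eqref{eq:comp}, the inclusion \eqref{eq:L2Y'} for the Young-measure component, and the tightness of $\mathcal L[\bfu_0]$ and $\mathcal L[\bfr_TW]$ as Radon measures on Polish spaces; since the restriction maps are continuous and the topology on the $\mathrm loc$-spaces is the projective limit over $T$, tightness on each $\mathcal X_T$ upgrades to tightness of $\{\mathcal L[\bfu_0,\bfu^{\eps_m},\mathcal V^{\eps_m},W]\}_m$ on $\mathcal X$. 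Jakubowski's theorem then yields a subsequence (not relabelled), a probability space $(\tilde\Omega,\tilde\mf,\tilde\prst)$, which we complete, carrying random variables $(\tilde\bfu^{\eps_m}_0,\tilde\bfu^{\eps_m},\tilde{\mathcal V}^{\eps_m},\tilde W^{\eps_m})$ with the prescribed laws $\mathcal L[\bfu_0,\bfu^{\eps_m},\mathcal V^{\eps_m},W]$, together with a limit $(\tilde\bfu_0,\tilde\bfu,\tilde{\mathcal V},\tilde W)$ to which they converge $\tilde\prst$-a.s. in the topology of $\mathcal X$; reading the product topology componentwise gives the five modes of convergence in \eqref{wWS116}. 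That the law of the limit is a Radon measure on $(\mathcal X,\mathscr B_{\mathcal X})$ follows because it equals the weak* limit of the tight laws $\mathcal L[\bfu_0,\bfu^{\eps_m},\mathcal V^{\eps_m},W]$, and on a quasi-Polish space a tight Borel probability measure is Radon.

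The main obstacle here is not any single estimate — the analytic input (the a priori bounds and the resulting tightness) has already been carried out — but rather the bookkeeping forced by the non-metrizable components $C_{w,\mathrm loc}([0,\infty);L^2_{\Div}(\mt))$ and $Y_2^{\mathrm loc}(Q_\infty)$: one must work consistently with the $\sigma$-fields generated by the countable separating families (this is exactly why the classical Skorokhod theorem fails and Jakubowski's version is needed), check that these coincide with $\mathscr B_\infty$ and $\mathscr B_Y^{\mathrm loc}$ so that the measurability and law statements are meaningful, and confirm that $\tilde\prst$-a.s. convergence in the product topology of $\mathcal X$ genuinely decouples into the individual convergences listed in \eqref{wWS116}. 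I expect these points to be routine in light of the framework set up in Section \ref{sec:prelim}.
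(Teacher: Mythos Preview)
Your proposal is correct and follows essentially the same route as the paper: the paper does not give a separate proof of this proposition but states it as a direct application of Jakubowski's Skorokhod theorem \cite{jakubow}, relying on the quasi-Polish structure of the factors of $\mathcal X$ (set up in Section~\ref{sec:GYM}) and the tightness of $\mathcal L[\bfu_0,\bfu^{\eps_m},\mathcal V^{\eps_m},W]$ established in the preceding Compactness subsection, with the Radon property of the limit law deduced from tightness of the weak* limit. Your write-up is in fact more explicit than the paper's about verifying the quasi-Polish property of the product and the compatibility of the $\sigma$-fields, but the underlying argument is the same.
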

It is now easy to show that we have $\tilde\p$-a.s.
\begin{align}\label{eq:2011}
\tilde\bfu^{\eps_m}(t,x)=\langle \tilde\nu_{t,x}^{\eps_m},\bfxi\rangle,\quad \tilde\bfu(t,x)=\langle \tilde\nu_{t,x},\bfxi\rangle\quad\text{for a.a.}\quad (t,x)\in Q_\infty,
\end{align}
where $\tilde{\mathcal V}^{\eps_m}=(\tilde\nu_{t,x}^{\eps_m},\tilde\nu_{t,x}^{\infty,\ep_m},\tilde\lambda^{\eps_m})$ and $\tilde{\mathcal V}=(\tilde\nu_{t,x},\tilde\nu_{t,x}^{\infty},\tilde\lambda)$. Indeed, for $T>0$ and $\bfpsi\in C^\infty_c(Q_T)$ we consider the mapping
\begin{align*}
(\bfw,\mathcal V)\mapsto \int_{Q_T}\big(\bfw-\langle \nu_{t,x},\bfxi\rangle\big)\cdot\bfpsi\dxt
\end{align*}
which is continuous on the paths space. We obtain from Proposition \ref{prop:skorokhod}
\begin{align*}
\int_{Q_T}\big(\tilde\bfu^{\eps_m}-\langle \tilde\nu^{\eps_m}_{t,x},\bfxi\rangle\big)\cdot\bfpsi\dxt\sim^d \int_{Q_T}\big(\bfu^{\eps_m}-\langle \nu^{\eps_m}_{t,x},\bfxi\rangle\big)\cdot\bfpsi\dxt=0,
\end{align*}
which implies the first claim from \eqref{eq:2011} by arbitrariness of $\bfpsi$ and $T$. Using again Proposition \ref{prop:skorokhod} we can pass to the limit $m\rightarrow\infty$ and the second assertion follows. Similarly, for any $T>0$ we can consider for
$f\in \mathcal G_2(Q_T)$ and $\varphi\in C(\overline Q_T)$ arbitrary the mappings
\begin{align*}
(\bfw,\mathcal V)\mapsto\int_{Q_T}&\varphi\langle\nu_{t,x}-\delta_{\bfw(t,x)},f(\bfxi)\rangle\dxt+\int_{Q_T}\varphi\langle\nu_{t,x}^{\infty},f^\infty(\bfxi)\rangle\,\dd\lambda
\end{align*}
to show that
\begin{align}\label{eq:1412}
\tilde{\mathcal V}^{\eps_m}=(\tilde\nu_{t,x}^{\eps_m},\tilde\nu_{t,x}^{\infty,\ep_m},\tilde\lambda^{\eps_m})=(\delta_{\tilde\bfu^{\eps_m}(t,x)},0,0)\quad\text{for a.a.}\quad (t,x)\in Q_\infty.
\end{align}

Now we introduce the filtration on the new probability space, which ensures the correct measurabilities of the new random variables.
%We denote by $\bfr_t$ the operator of restriction to the interval $[0,t]$ acting on various path spaces. In particular, if $X$ stands for one of the path spaces $L^r_{\mathrm loc}(0,\infty;L^r(\mathcal{O}))$ or $C_{\mathrm loc}([0,\infty),\mathfrak U_0)$ and $t\in[0,T]$, we define
%\begin{align}\label{restr}
%\bfr_t:X\rightarrow X|_{[0,t]},\quad f\mapsto f|_{[0,t]}.
%\end{align}
%Clearly, $ \bfr_t$ is a continuous mapping.
Let $(\tilde{\mathfrak F}_t)_{t\geq0}$ and $(\tilde{\mathfrak F}_t^{\eps_m})_{t\geq0}$ be the $\tilde{\p}$-augmented canonical filtration of the variables $\big(\tilde{\bfu}_0,\tilde{\bfu},\tilde{\mathcal{V}},\tilde{W}\big)$ and $\big(\tilde{\bfu}_0^{\eps_m},\tilde{\bfu}^{\eps_m},\tilde{\mathcal V}^{\eps_m},\tilde{W}^{\eps_m}\big)$, respectively, that is
\begin{align*}
\tilde{\mathfrak F}_t&=\sigma\Big(\sigma \big(\tilde{\bfu}_0,\bfr_t\tilde{\bfu},\bfr_t\tilde{W}\big)\cup\sigma_t[\tilde{\mathcal V}]\cup\big\{\mathcal N\in\tilde{\mathfrak F};\;\tilde{\p}(\mathcal N)=0\big\}\Big),\quad t\geq0,\\
\tilde{\mathfrak F}^{\eps_m}_t&=\sigma\Big(\sigma\big(\tilde{\bfu}^{\eps_m}_0,\tilde{\bfu}^{\eps_m},\bfr_t\tilde{W}^{\eps_m})\cup\sigma_t[\tilde{\mathcal V}^{\eps_m}]\cup\big\{\mathcal N\in\tilde{\mathfrak F};\;\tilde{\p}(\mathcal N)=0\big\}\Big),\quad t\geq0.
\end{align*}
\db{Here $\sigma_t$ denotes the history of a random distribution as defined in \eqref{eq:2402a}, where generalised Young measures are identified as random distribution in the sense of \eqref{eq:2402b}.}
The definitions above guarantee that the processes are adapted and we can define stochastic integrals.

\subsection{Concerning the new probability space}
\label{subsec:new}
Now are going to show that the approximated equations also hold on the new probability
space. We use the elementary method from \cite{on1} which has already been generalized to different settings (see, for instance, \cite{BrHo,Ho1}). The key idea is to identify the quadratic variation of the corresponding martingale as well as its cross variation with the limit Wiener process obtained through compactness. First we notice that $\tilde W$ has the same law as $W$. As a consequence, there exists a collection of mutually independent real-valued $(\tilde{\mf}_t^{\eps_m})_{t\geq0}$-Wiener processes $(\tilde{\beta}^{\eps_m}_k)$ such that $\tilde{W}^N=\sum_{k}\tilde{\beta}^{\eps_m}_k e_k$. In particular, there exists a collection of mutually independent real-valued $(\tilde{\mf}_t)_{t\geq0}$-Wiener processes $(\tilde{\beta}_k)$ such that $\tilde{W}=\sum_{k}\tilde{\beta}_k e_k$.
Let us now define for all $t\in[0,T]$ and $\bfvarphi\in C^\infty_{\Div}(\mt)$ the functionals
\begin{equation*}
\begin{split}
\mathfrak M^{\eps_m}(\bfu_0,\bfu,\mathcal V)_t&=\int_{\mt}\big(\bfu(t)-\bfu_0\big)\cdot\bfvarphi\dx-\eps_m\int_0^t\int_{\mt}\bfu\cdot\Delta\bfvarphi\dx\,\dif s\\
&-\int_0^t\int_{\mt}\big\langle\nu_{t,x},\bfxi\otimes\bfxi\big\rangle:\nabla\bfvarphi\dx\,\dif s-\int_{(0,t)\times\mt}\big\langle\nu_{t,x}^\infty,\bfxi\otimes\bfxi\big\rangle:\nabla\bfvarphi\,\dif \lambda\\
\mathfrak N_t&=\sum_{k=1}^\infty\int_0^t\bigg(\int_{\mt} \Phi\bfe_k\cdot\bfvarphi\dx\bigg)^2\ds,\quad
\mathfrak N^k_t=\int_0^t\int_{\mt} \Phi\bfe_k\cdot\bfvarphi\dxs.
\end{split}
\end{equation*}
By $\mathfrak M(\bfu^{\eps_m}(0),\bfu^{\eps_m},\V)_{s,t}$ we denote the increment $\mathfrak M(\bfu^{\eps_m}(0),\bfu^{\eps_m},\V)_{t}-\mathfrak M(\bfu^{\eps_m}(0),\bfu^{\eps_m},\V)_{s}$ and similarly for $\mathfrak N_{s,t}$ and $\mathfrak N^k_{s,t}$. 
Note that the proof will be complete once we show that the process $\mathfrak M(\tilde\bfu_0^{\eps_m},\tilde\bfu^{\eps_m},\tilde\V^{\eps_m})$ is an $(\tilde{\mf}_t^{\eps_m})_{t\geq0}$-martingale and its quadratic and cross variations satisfy, respectively,
\begin{equation}\label{mart}
\begin{split}
\langle\langle \mathfrak M(\tilde{\bfu}_0^{\eps_m},\tilde\bfu^{\eps_m},\tilde{\V}^{\eps_m})\rangle\rangle&=\mathfrak N,\qquad\langle\langle \mathfrak M(\tilde{\bfu}_0^{\eps_m},\tilde\bfu^{\eps_m},\tilde{\V}^{\eps_m}),\tilde \beta_k\rangle\rangle=\mathfrak N^k.
\end{split}
\end{equation}
Indeed, in that case we have
\begin{align}\label{neu3108}
\Big\langle\Big\langle \mathfrak M(\tilde{\bfu}^{\eps_m}_0,\tilde\bfu^{\eps_m},\tilde{\V}^{\eps_m})-\int_0^{\cdot} \int_{\mt}\bfphi\cdot\Phi\dx\,\dd\tilde W^{\eps_m}\Big\rangle\Big\rangle=0,
\end{align}
which implies the desired equation on the new probability space.
Let us verify \eqref{mart}. To this end, we claim that with the above uniform estimates in hand, the mapping
$$(\bfu_0,\bfu,\V)\mapsto \mathfrak M(\bfu_0,\bfu,\V)_t$$
is well-defined and continuous on the path space.
Hence we have
\begin{align*}
\mathfrak M^{\eps_m}(\bfu^{\eps_m}(0),\bfu^{\eps_m},\V^{\eps_m})&\sim^d \mathfrak M^\eps(\tilde\bfu^{\eps_m}_0,\tilde\bfu^{\eps_m},\tilde{\V}^{\eps_m}).
\end{align*}
Let us  now fix times $s,t\in[0,T]$ such that $s<t$ and let
$$h:\mathcal X\big|_{[0,s]}\rightarrow [0,1]$$
be a continuous function.
Since
$$\mathfrak M(\bfu^{\eps_m}(0),\bfu^{\eps_m},\V^{\eps_m})_t=\int_0^t\int_{\mt}\bfphi\cdot\Phi\dx\,\dd W=\sum_{k=1}^\infty\int_0^t\int_{\mt} \Phi\bfe_k\cdot\bfvarphi\dx\,\dd\beta_k$$
is a square integrable $(\mf_t)_{t\geq0}$-martingale, we infer that
$$\big[\mathfrak M^{\eps_m}(\bfu^{\eps_m}(0),\bfu^{\eps_m},\V^{\eps_m})\big]^2-\mathfrak N,\quad \mathfrak M^{\eps_m}(\bfu^{\eps_m}(0),\bfu^{\eps_m},\V^{\eps_m})\beta_k-\mathfrak N^k,$$
are $(\mf_t)_{t\geq0}$-martingales.
Let $\bfr_s$ be the restriction of a function to the interval $[0,s]$. Then it follows from the equality of laws in Proposition \ref{prop:skorokhod} that
\begin{align}\label{exp11}
&\tilde{\E}\big[\,h\big(\tilde{\bfu}^{\eps_m}_0,\bfr_s\tilde\bfu^{\eps_m},\bfr_s\tilde\V^{\eps_m},\bfr_s\tilde{W}^{\eps_m}\big)\mathfrak M^{\eps_m}(\tilde{\bfu}^{\eps_m}_0,\tilde\bfu^{\eps_m},\bfr_s\tilde\V^{\eps_m})_{s,t}\big]\\&=\E \big[\,h\big({\bfu}^{\eps_m}(0),\bfr_s\bfu^{\eps_m},\bfr_s\V^{\eps_m},\bfr_sW\big)\mathfrak M^{\eps_m}({\bfu}^{\eps_m}(0),\bfu^{\eps_m},\V^{\eps_m})_{s,t}\big]=0,
\nonumber
\end{align}
\begin{align}\label{exp21}
&\tilde{\E}\bigg[\,h\big(\tilde{\bfu}^{\eps_m}_0,\bfr_s\tilde\bfu^{\eps_m},\bfr_s\tilde\V^{\eps_m},\bfr_s\tilde{W}^{\eps_m}\big)\Big([\mathfrak M^{\eps_m}(\tilde{\bfu}^{\eps_m}_0,\tilde\bfu^{\eps_m},\tilde\V^{\eps_m})^2]_{s,t}-\mathfrak N_{s,t}\Big)\bigg]\\
&=\E\bigg[\,h\big({\bfu}^{\eps_m}(0),\bfr_s\bfu^{\eps_m},\V^{\eps_m},\bfr_sW\big)\Big([\mathfrak M^{\eps_m}({\bfu}^{\eps_m}(0),\bfu^{\eps_m},\V^{\eps_m})^2]_{s,t}-\mathfrak N_{s,t}\Big)\bigg]=0,
\nonumber
\end{align}
\begin{align}\label{exp31}
&\tilde{\E}\bigg[\,h\big(\tilde{\bfu}^{\eps_m}_0,\bfr_s\tilde\bfu^{\eps_m},\bfr_s\tilde\V^{\eps_m},\bfr_s\tilde{W}^{\eps_m}\big)\Big([\mathfrak M^{\eps_n}(\tilde{\bfu}^{\eps_m}_0,\tilde\bfu^{\eps_m},\tilde\V^{\eps_n})\tilde{\beta}_k^{\eps_m}]_{s,t}-\mathfrak N^k_{s,t}\Big)\bigg]\\
&=\E\bigg[\,h\big({\bfu}^{\eps_m}(0),\bfr_s\bfu^{\eps_m},\bfr_s\V^{\eps_m},\bfr_sW\big)\Big([\mathfrak M^{\eps_m}({\bfu}^{\eps_m}(0),\bfu^{\eps_m},\V^{\eps_m})\beta_k]_{s,t}-\mathfrak N^k_{s,t}\Big)\bigg]=0.
\nonumber
\end{align}
So we have shown \eqref{mart} and hence (\ref{neu3108}). On account of the convergences
from Proposition \ref{prop:skorokhod} and the higher moments from \eqref{eq:apriori1} we can pass to the limit in \eqref{exp11}--\eqref{exp31} and obtain the momentum equation in the sense of \eqref{eq:momentum}.\\
Let us finally consider the energy inequality in the sense of \eqref{N3}, for which we introduce
the abbreviations
\begin{align*}
\mathscr M^{\eps_m}_t=\int_0^t\int_{\mt}\bfu^{\eps_m}\cdot\Phi\dx\,\dd  W,\quad \tilde{\mathscr M}^{\eps_m}_t=\int_0^t\int_{\mt}\tilde\bfu^{\eps_m}\cdot\Phi\dx\,\dd  \tilde W^{\eps_m},
\end{align*}
for the stochastic integrals.
For the Navier--Stokes equations (on the original probability space)
with $E_t^{\eps_m}=\frac{1}{2}\int_{\mt}|\bfu^{\eps_m}|^2\dx$ we have
 \begin{align*}
\begin{aligned}
E_t^{\eps_m} \leq E_s^{\eps_m}+\frac{1}{2} \int_s^t \|\Phi\|_{L_2((\mathfrak U,L^2(\mt)))}^2   \dt
+ \mathscr M^{\eps_m}_t-\mathscr M^{\eps_m}_s
\end{aligned}
\end{align*}
for a.a. $s$ (including $s=0$) and all $t\geq s$, cf. \eqref{N3a}. For a fixed $s$ this is equivalent to
\begin{align*}
-  \int_s^\infty \partial_t \phi E^{\eps_m}_t\dt &- \phi(s) E^{\eps_m}_s
\\
& \leq \frac{1}{2} \int_s^\infty \phi  \|\Phi\|_{L_2((\mathfrak U,L^2(\mt)))}^2   \dt
+ \int_s^\infty \phi  \int_{\mt} \bfu^{\eps_m}\cdot\Phi\dx \, {\rm d}W
\end{align*}
$\p$-a.s. for all $\varphi\in C^\infty_c([s,\infty))$. Due to Propositions \ref{RDT3}  and \ref{prop:skorokhod} this continues to hold on the new probability space and we obtain
\begin{align*}
\tilde E_t^{\eps_m} \leq \tilde E_s^{\eps_m}+\frac{1}{2} \int_s^t \|\Phi\|_{L_2((\mathfrak U,L^2(\mt)))}^2   \ds
+ \tilde{\mathscr M}^{\eps_m}_t-\tilde{\mathscr M}^{\eps_m}_s
\end{align*}
$\tilde{\p}$-a.s. for a.a. $s$ (including $s=0$) and all $t\geq s$. Averaging in $t$ and $s$ yields
%\begin{align*}
%\begin{aligned}
%\dashint_{t-\varrho}^tE_\sigma^\varepsilon\ds &\leq \dashint_{s-\varrho}^sE_\tau^\varepsilon\,\dd\tau+\frac{1}{2} \dashint_{s-\varrho}^s\dashint_{t-\varrho}^t\int_\tau^\sigma \|\Phi\|_{L_2((\mathfrak U,L^2(\mt)))}^2   \dd s\ds\,\dd\tau+ \dashint_{s-\varrho}^s\dashint_{t-\varrho}^t {\mathscr M}_s\,\dd\tau\ds
%\end{aligned}
%\end{align*}
%provided $s>0$ and $\varrho<\min\{s,t-s\}$ (the easier case $s=0$ will be treated at the end).
%All terms are now continuous on the path space (because of the additional time integrals).
%So, this carries over to the new probability space by Proposition \ref{prop:skorokhod} and we obtain
%we can pass to the limit in $\varepsilon$. We obtain $\p$-a.s.
\begin{align}\label{eq:1907}
\begin{aligned}
\dashint_{t-\varrho}^t\tilde E^{\eps_m}_r\,\dd r &\leq \dashint_{s-\varrho}^s\tilde E^{\eps_m}_\tau\,\dd\tau+\frac{1}{2} \dashint_{s-\varrho}^s\dashint_{t-\varrho}^t\int_\tau^r \|\Phi\|_{L_2((\mathfrak U,L^2(\mt)))}^2   \ds\,\dd r\,\dd\tau\\&+ \dashint_{s-\varrho}^s\dashint_{t-\varrho}^t (\tilde{\mathscr M}^{\eps_m}_r-\tilde{\mathscr M}^{\eps_m}_\tau)\,\dd r\,\dd\tau
\end{aligned}
\end{align}
provided $s>0$ and $\varrho<\min\{s,t-s\}$ (the easier case $s=0$ will be treated at the end).
We aim to pass to the limit first in $m$ and then in $\varrho$. The terms in \eqref{eq:1907} involving the energy are continuous on the path space due to the additional time integrals. Hence they converge $\tilde{\mathbb P}$-a.s. as $m\rightarrow\infty$ to the expected limits by Proposition \ref{prop:skorokhod}. In order to prove
that as $m\rightarrow\infty$ we have
\begin{align}\label{eq:1907b}
 \tilde{\mathscr M}^{\eps_m}\rightarrow \tilde{\mathscr M}:=\int_0^t\int_{\mt}\tilde\bfu\cdot\Phi\dx\,\dd  \tilde W\quad\text{in}\quad L^2_{\mathrm loc}([0,\infty))
 \end{align}
 in probability we aim to apply \cite[Lemma 2.1]{debussche1}. Hence we need to know in addition to \eqref{wWS116}$_5$ that
\begin{align}\label{eq:2406}
\int_{\mt} \tilde\bfu^{\varepsilon_m}\cdot\Phi\dx\rightarrow \int_{\mt} \tilde\bfu\cdot\Phi\dx\quad\text{in}\quad L^2_{\mathrm loc}([0,\infty);L_2(\mathfrak U;\mathbb R))
\end{align}
in probability. By \eqref{wWS116}$_3$ we have $\tilde{\mathbb P}$-a.s.
\begin{align*}
\int_{\mt} \tilde\bfu^{\varepsilon_m}(t)\cdot\Phi\dx\rightarrow \int_{\mt} \tilde\bfu(t)\cdot\Phi\dx\quad\text{in}\quad L_2(\mathfrak U;\mathbb R)
\end{align*}
for all $t\geq0$. Hence we also obtain convergence in $L^2(\tilde \Omega;L_2(\mathfrak U;\mathbb R))$ using the higher moments from \eqref{eq:apriori1}. Finally, we can use again
\eqref{eq:apriori1} to obtain \eqref{eq:2406} (in fact, we even have $L^2(\tilde\Omega)$-convergence).
In conclusion we can pass to the limit in \eqref{eq:1907} (first in $m$ and then in $\varrho$)
to obtain
  \begin{align}\label{eq:1607}
\tilde E_t \leq \tilde E_s+\frac{1}{2} \int_s^t \|\Phi\|_{L_2((\mathfrak U,L^2(\mt)))}^2   \dt
+ \tilde{\mathscr M}_t-\tilde{\mathscr M}_s
\end{align}
provided $t,s$ are Lebesgue points of $\tilde E_t=\frac{1}{2}\int_ {\mt}\big\langle\tilde\nu_{t,x},|\bfxi|^2\big\rangle\dx+\frac{1}{2}\tilde\lambda_t(\mt)$. \db{Here we also used that 
$\frac{1}{\varrho}\tilde{\E}\tilde\lambda((t-\varrho,t)\times\mt)$
stays bounded in $\varrho$ by \eqref{eq:1907}, which shows that $\tilde\lambda=\tilde\lambda_t\otimes\mathcal L^1$ with $\tilde\lambda_t\in L^\infty_{w^*}(0,T;\mathscr M^+(\mt))$ $\tilde\p$-a.s.}
Relation \eqref{eq:1607} implies that the function
  \begin{align*}
t\mapsto \tilde E_t  -\int_0^t \|\Phi\|_{L_2((\mathfrak U,L^2(\mt)))}^2   \ds
- \tilde{\mathscr M}_t
\end{align*}
is non-increasing. Since it is also  pathwise bounded (recall again \eqref{eq:apriori1}), left- and right-sided limits exist in all points. Furthermore,
$\int_0^\cdot \|\Phi\|_{L_2((\mathfrak U,L^2(\mt)))}^2   \ds$ and $\tilde {\mathscr M}$
are continuous such that left- and right-sided limits also exists for $\tilde E_t$. Approximating arbitrary
$t$ and $s$ by Lebesgue points and using \eqref{eq:1607} we have
  \begin{align}\label{eq:1607'}
\tilde E_{t^+} \leq \tilde E_{s^-}+\frac{1}{2} \int_s^t \|\Phi\|_{L_2((\mathfrak U,L^2(\mt)))}^2   \dt
+ \tilde{\mathscr M}_t-\tilde{\mathscr M}_s
\end{align}
$\tilde{\p}$-a.s. for all $t>s>0$.
If $s=0$ we argue similarly to \eqref{eq:1907} but without
the averaging in $s$. We obtain 
\begin{align*}
\begin{aligned}
\dashint_{t-\varrho}^t\tilde E_r^{\varepsilon_m}\,\dd r &\leq \tilde E_0^{\varepsilon_m}+\frac{1}{2} \dashint_{t-\varrho}^t\int_0^r \|\Phi\|_{L_2((\mathfrak U,L^2(\mt)))}^2   \ds\,\dd r+ \dashint_{t-\varrho}^t \tilde{\mathscr M}^{\eps_m}_r\,\dd r
\end{aligned}
\end{align*}
$\tilde{\p}$-a.s. provided $\varrho<t$. Since $E_0^{\varepsilon_m}=\frac{1}{2}\int_{\mt}|\bfu_{0}^{\varepsilon_m}|^2\dx$ we can argue again by Proposition \ref{prop:skorokhod} and \eqref{eq:1907b} to conclude
  \begin{align}\label{eq:1607c}
\tilde E_t \leq \tilde E_{0^-}+\frac{1}{2} \int_0^t \|\Phi\|_{L_2((\mathfrak U,L^2(\mt)))}^2   \ds
+ \tilde{\mathscr M}_t
\end{align}
$\tilde{\p}$-a.s. for Lebesgue points $t$, where $\tilde E_{0^-}=\frac{1}{2}\int_{\mt}|\tilde\bfu_{0}|^2\dx$. Finally, we also obtain
  \begin{align*}
\tilde E_{t^+} \leq \tilde E_{0^-}+\frac{1}{2} \int_0^t \|\Phi\|_{L_2((\mathfrak U,L^2(\mt)))}^2   \ds
+ \tilde{\mathscr M}_t
\end{align*}
$\tilde{\p}$-a.s. for all $t>0$. This, in combination with \eqref{eq:1607'},
finishes the proof of the energy inequality \eqref{N3}. The proof of Theorem \ref{thm:main1}
is hereby complete.

\section{Weak-strong uniqueness}
\label{sec:weakstrong}
In this section we compare the dissipative solution from Definition \ref{def:soleuler} with a strong solution. The results are reminiscent of those from \cite{BFH} on the compressible Navier--Stokes system. A strong solution to the stochastic Euler equations is known to exists at least in short time. A concept which we make precise in the following.  

\begin{definition}\label{def:strsol}

Let $(\Omega,\mf,(\mf_t),\prst)$ be a stochastic basis with a complete right-continuous filtration, let ${W}$ be an $\left( \mathfrak{F}_t \right)$-cylindrical Wiener process. A random variable
 $\bfu$ and a stopping time $\mathfrak{t}$ is called a (local) strong solution to system \eqref{eq:Euler} provided
\begin{enumerate}[(a)]
\item the process $t \mapsto \bfu (t\wedge \mathfrak t, \cdot) $ is $\left( \mathfrak{F}_t \right)$-adapted, $\bfu (t\wedge \mathfrak t, \cdot),\nabla\bfu (t\wedge \mathfrak t, \cdot) \in C_{\mathrm loc}([0,\infty)\times\mt)$ $\mathbb P$-a.s. and for all $T>0$
\[
\E\bigg[\sup_{0\leq t\leq T}\big(\|\nabla\bfu(\cdot\wedge \mathfrak t)\|_{L^\infty_x}+\|\bfu(\cdot\wedge \mathfrak t)\|_{L^\infty_x}\big) \bigg] < \infty;
\]
\item for all
 $\bfvarphi\in C^\infty_{\Div}(\mt)$ and all $t\geq0$ there holds $\prst$-a.s.
 \begin{align*}
\int_{\mt}\bfu(t\wedge \mathfrak t)\cdot\bfvarphi\dx&=\int_{\mt}\bfu(0)\cdot\bfvarphi\dx-\int_0^{t\wedge \mathfrak t}\int_{\mt}(\nabla\bfu)\bfu\cdot\bfvarphi\dx\,\dif s
\\&+\int_0^{t\wedge \mathfrak t}\int_{\mt}\bfvarphi\cdot\varPhi\dx\,\dif W;
\end{align*}
\item we have $\Div\bfu (\cdot\wedge \mathfrak t)=0$ $\mathbb P$-a.s.
\end{enumerate}
\end{definition}
\begin{remark}
A direct application of It\^{o}'s formula (in the Hilbert space version for $L^2_{\Div}(\mt)$) shows that strong solutions satisfy the energy equality
\begin{align}\label{eq:energy=}
\int_{\mt}|\bfu(t)|^2\dx=\int_{\mt}|\bfu(0)|^2\dx+2\int_0^{t}\int_{\mt}\bfu\cdot\varPhi\dx\,\dif W+\int_0^t\|\Phi\|_{L_2(\mathfrak U,L^2(\mt))}^2\dt
\end{align}
for all $t\in[0,\mathfrak t]$ $\p$-a.s.
\end{remark}
The existence of local-in-time strong solutions to \eqref{eq:Euler} (however, under slip boundary conditions and not in the periodic setting) in the sense of Definition \ref{def:strsol} was established in \cite[Theorem 4.3]{GHVic}
under certain assumptions imposed on the coefficient $\Phi$.

\subsection{Pathwise weak-strong uniqueness}
We begin with the case that the dissipative solution and the strong solution are defined on the same probability space. We have the following result concerning weak-strong uniqueness. 
\begin{theorem}\label{thm:uniq}
The pathwise weak-strong uniqueness holds true for the stochastic Euler equations \eqref{eq:Euler} in the following sense: let $$\big((\Omega,\mf,(\mf_t),\prst),\bfu,\mathcal V,W)$$ be a dissipative martingale solution to  \eqref{eq:Euler} in the sense of Definition \ref{def:soleuler} and let $\bfv$ and a stopping time 
$\mathfrak{t}$ be a strong solution of the same problem in the sense of Definition \ref{def:strsol} defined on the same stochastic basis with the same Wiener process and with the same initial data
(meaning $\bfv (0, \cdot) = \bfu (0, \cdot)$ $\mathbb{P}$-a.s.).
Then we have for a.a. $(t,x)$ that
$\bfu (t\wedge \mathfrak{t},x) = 
{\bf v} (t \wedge \mathfrak{t},x)$ and $(\nu_{t\wedge \mathfrak t,x},\nu_{t\wedge \mathfrak t,x}^\infty,\lambda)=(\delta_{\bfu(t\wedge \mathfrak t,x)},0,0)$ $\mathbb P$-a.s.
\end{theorem}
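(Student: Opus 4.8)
The plan is to run the relative energy (``weak--strong'') argument, in the spirit of \cite{BrDeSz,BFH}, \emph{pathwise}, after localising by stopping times. Fix $R>0$ and set
$\mathfrak{t}_R:=\mathfrak{t}\wedge\inf\{t\ge0:\|\bfv(t)\|_{L^\infty_x}+\|\nabla\bfv(t)\|_{L^\infty_x}\ge R\}\wedge R$; by Definition \ref{def:strsol}(a) we have $\mathfrak{t}_R\to\mathfrak{t}$ $\p$-a.s.\ as $R\to\infty$. On $[0,\mathfrak{t}_R]$ the strong solution $\bfv$ is bounded in $C^1(\mt)$, so all hypotheses of the It\^o product rule Lemma \ref{lem} are met with $\bfw^1=\bfu$, $\bfH^1=\langle\nu_{t,x},\bfxi\otimes\bfxi\rangle$, $\bfG^1=\langle\nu_{t,x}^\infty,\bfxi\otimes\bfxi\rangle$, $\Phi^1=\Phi$ (note $\bfG^1$ is bounded by $1$, being a second moment of a probability measure on $\mathbb S^2$, and $\bfH^1\in L^\infty_tL^1_x$ by Definition \ref{def:soleuler}(c)--(d)), and with $\bfw^2=\bfv$, $\bfh^2=-(\nabla\bfv)\bfv$, $\Phi^2=\Phi$, using the momentum equation \eqref{eq:momentum} for $\bfu$ and the weak form of the strong-solution equation (for the stopped process $\bfv(\cdot\wedge\mathfrak{t}_R)$). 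This yields an exact identity for $t\mapsto\int_{\mt}\bfu(t)\cdot\bfv(t)\dx$ on $[0,\mathfrak{t}_R]$. I would then combine this identity with the energy \emph{equality} \eqref{eq:energy=} for $\bfv$ (equivalently, It\^o applied to $\tfrac12\|\bfv\|_{L^2_x}^2$, the convective term vanishing since $\Div\bfv=0$) and with the energy \emph{inequality} \eqref{N3} for the dissipative solution, used with $s=0$.

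\textbf{Assembling the relative energy.} Introduce
\[
\mathcal{R}_t:=\tfrac12\int_{\mt}\big\langle\nu_{t,x},|\bfxi-\bfv(t)|^2\big\rangle\dx+\tfrac12\lambda_t(\mt)
= E_t-\int_{\mt}\bfu(t)\cdot\bfv(t)\dx+\tfrac12\int_{\mt}|\bfv(t)|^2\dx ,
\]
the second equality using Definition \ref{def:soleuler}(e). Adding the three relations, all stochastic integrals cancel exactly: the term $+\!\int_0^\cdot\!\langle\bfu,\Phi\,\dd W\rangle$ from \eqref{N3} and $-\!\int_0^\cdot\!\langle\bfu,\Phi\,\dd W\rangle-\!\int_0^\cdot\!\langle\bfv,\Phi\,\dd W\rangle$ from the product rule are annihilated by $+\!\int_0^\cdot\!\langle\bfv,\Phi\,\dd W\rangle$ from the energy equality for $\bfv$; likewise the It\^o corrections $\tfrac12\!\int\!\|\Phi\|_{L_2}^2$, $-\!\int\!\|\Phi\|_{L_2}^2$, $\tfrac12\!\int\!\|\Phi\|_{L_2}^2$ cancel; and since $\bfv(0)=\bfu(0)$ the initial contribution is $\tfrac12\int_{\mt}|\bfu(0)-\bfv(0)|^2\dx=0$. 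Hence, $\p$-a.s.\ on $[0,\mathfrak{t}_R]$,
\[
\mathcal{R}_{t^+}\le-\int_0^{t}\!\!\int_{\mt}\big\langle\nu_{s,x},\bfxi\otimes\bfxi\big\rangle\!:\!\nabla\bfv\dx\,\dd s
-\int_{(0,t)\times\mt}\!\!\big\langle\nu_{s,x}^\infty,\bfxi\otimes\bfxi\big\rangle\!:\!\nabla\bfv\,\dd\lambda
+\int_0^t\!\!\int_{\mt}(\nabla\bfv)\bfv\cdot\bfu\dx\,\dd s ,
\]
where $\mathcal{R}_{t^+}$ is well defined because $t\mapsto\int_{\mt}\bfu\cdot\bfv\dx$ and $t\mapsto\|\bfv\|_{L^2_x}^2$ are continuous while $E_t$ has one-sided limits, cf.\ Remark \ref{remark:energy}; in particular $\mathcal{R}_{0^+}\le\mathcal{R}_{0^-}=0$.

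\textbf{Algebra and Gronwall.} Expanding $\bfxi\otimes\bfxi=(\bfxi-\bfv)\otimes(\bfxi-\bfv)+\bfxi\otimes\bfv+\bfv\otimes\bfxi-\bfv\otimes\bfv$ inside $\langle\nu_{s,x},\cdot\rangle$, using $\langle\nu_{s,x},\bfxi\rangle=\bfu$ together with the incompressibility identities $\int_{\mt}(\bfv\otimes\bfv):\nabla\bfv\dx=\int_{\mt}(\bfv\otimes\bfu):\nabla\bfv\dx=0$ and $\int_{\mt}(\bfu\otimes\bfv):\nabla\bfv\dx=\int_{\mt}(\nabla\bfv)\bfv\cdot\bfu\dx$ (all valid since $\Div\bfu=\Div\bfv=0$), the lower-order terms cancel the last term above and one is left with
\[
\mathcal{R}_{t^+}\le-\int_0^t\!\!\int_{\mt}\big\langle\nu_{s,x},(\bfxi-\bfv)\otimes(\bfxi-\bfv)\big\rangle\!:\!\nabla\bfv\dx\,\dd s
-\int_{(0,t)\times\mt}\!\!\big\langle\nu_{s,x}^\infty,\bfxi\otimes\bfxi\big\rangle\!:\!\nabla\bfv\,\dd\lambda .
\]
The two integrands are bounded by $\|\nabla\bfv(s)\|_{L^\infty_x}$ times $\langle\nu_{s,x},|\bfxi-\bfv|^2\rangle$ and $\lambda_s(\mt)$ respectively (the latter because $|\bfxi|=1$ $\nu_{s,x}^\infty$-a.s.), so the right-hand side is at most $2\int_0^t\|\nabla\bfv(s)\|_{L^\infty_x}\,\mathcal{R}_s\,\dd s$. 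Since the right-continuous function $t\mapsto\mathcal{R}_{t^+}$ is nonnegative, equals $\mathcal{R}_s$ for a.e.\ $s$, and $s\mapsto\|\nabla\bfv(s)\|_{L^\infty_x}\le R$ is integrable on $[0,\mathfrak{t}_R]$, Gronwall's lemma gives $\mathcal{R}_{t\wedge\mathfrak{t}_R}=0$ for all $t$, $\p$-a.s.; letting $R\to\infty$, $\mathcal{R}_{t\wedge\mathfrak{t}}=0$ $\p$-a.s. Finally $\mathcal{R}=0$ forces $\lambda_t=0$ (hence $\lambda=0$ on $(0,\mathfrak{t})\times\mt$) and $\nu_{t,x}=\delta_{\bfv(t,x)}$ for a.e.\ $(t,x)$, which with Definition \ref{def:soleuler}(e) gives $\bfu(t\wedge\mathfrak{t},x)=\bfv(t\wedge\mathfrak{t},x)$ and $(\nu_{t\wedge\mathfrak{t},x},\nu_{t\wedge\mathfrak{t},x}^\infty,\lambda)=(\delta_{\bfu(t\wedge\mathfrak{t},x)},0,0)$, as claimed.

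\textbf{The main difficulty.} The delicate points are (i) the rigorous application of Lemma \ref{lem} --- this is precisely why that lemma was set up to allow one factor to be only weakly $L^2$-continuous and to carry a measure-valued term $\bfG^1\,\dd\lambda$; and (ii) combining an \emph{inequality} for $E_{t^+}$ (valid for every $t$) with two \emph{equalities} for continuous processes, so that the resulting relative-energy bound is genuinely pointwise in $t$ and survives the possible downward jumps of $E_t$ --- this is exactly where one needs the energy inequality in the strong pointwise form \eqref{N3} and where the stopping-time localisation is essential (so that the truncated stochastic integrals coincide and cancel pathwise, with no recourse to expectations). Keeping the concentration measure $\lambda$ consistent across the momentum balance, the energy, and the relative energy, and absorbing it into the Gronwall term via $|\bfxi|=1$, is the book-keeping that requires the most care.
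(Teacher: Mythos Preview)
Your proof is correct and follows the same relative-energy strategy as the paper: localise by a stopping time bounding $\|\nabla\bfv\|_{L^\infty_x}$, apply Lemma~\ref{lem} to compute $\int_{\mt}\bfu\cdot\bfv\,\dx$, combine with the energy inequality \eqref{N3} for $\bfu$ and the energy equality \eqref{eq:energy=} for $\bfv$, rewrite the convective remainder via the identity \eqref{D_form}, and close by Gronwall.

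The one genuine (though minor) difference is in how the stochastic integrals are handled. The paper takes \emph{expectations} throughout, so the three martingale terms vanish individually and Gronwall is applied to $\E[F(t\wedge\tau_L)]$; one then concludes $F=0$ $\p$-a.s.\ from $\E F=0$ and $F\ge0$. You instead observe that the three stochastic integrals cancel \emph{pathwise} (which they do, since all three are integrals of $\Phi$ against the same $W$, paired respectively with $\bfu$, $\bfv$, and $\bfu+\bfv$), and apply Gronwall directly to the $\p$-a.s.\ trajectory of $\mathcal R_{t^+}$. Your route is slightly more direct and avoids any integrability issues for the stopped martingales; the paper's route is the more traditional one and makes the passage $\tau_L\to\mathfrak t$ marginally simpler (convergence in probability suffices). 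Both arrive at the same conclusion with the same ingredients.
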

\begin{proof}
We start by introducing the stopping time
\[
\tau_L = \inf \Big\{ t \in (0,\mathfrak t) \ \big| \  \ \| \nabla\bfv (t, \cdot) \|_{L^\infty_x} > L \Big\},\quad L>0,
\]
and define $\tau_L=\mathfrak t$ if $\{\dots\}=\emptyset$.
Since $\E\big[\sup_{t\in[0,\mathfrak t]}\|\nabla\bfv(t)\|_{L^\infty_x}\big]<\infty$ by assumption (recall Definition \ref{def:strsol}) we have
\[
\mathbb{P} \left[ \tau_L < \mathfrak t \right]\leq \mathbb{P} \left[ \sup_{t\in[0,\mathfrak t]}\|\nabla\bfv(t)\|_{L^\infty_x}\geq L\right]\leq\frac{1}{L}\E\bigg[\sup_{t\in[0,\mathfrak t]}\|\nabla\bfv(t)\|_{L^\infty_x}\bigg]\rightarrow 0
\]
as $L\rightarrow\infty$ by Tschebyscheff's inequality.
Consequently, we have 
\begin{align}\label{eq:3003}
\tau_L\rightarrow \mathfrak t\quad  \text{in probability}.
\end{align}
Whence it is enough to show the claim in $(0,\tau_L)$ for a fixed $L$.
We consider the functional
\begin{align*}
F(t) =  \frac{1}{2}\int_ {\T}\big\langle \nu_{t,x},|\boldsymbol{\xi}-\bfv|^2\big\rangle\dx+\frac{1}{2}\lambda_t(\T)
\end{align*}
defined for a.a. $t<\mathfrak t$.
Noting that $\bfu=\langle \nu_{t,x},\boldsymbol{\xi} \rangle$ we can write
\begin{align*}
F(t) &= \frac{1}{2}\bigg(\int_ {\T} \langle \nu_{t,x},|\boldsymbol{\xi}|^2 \rangle +\lambda_t(\T)-2\int_ {\T}\bfu\cdot \bfv\dx + \int_{\T}|\bfv|^2  \dx\bigg),\\
&=E(t) + \frac{1}{2}\int_{\T}|\bfv|^2  \dx-2\int_ {\T}\bfu\cdot \bfv\dx.
\end{align*}
This definition can be extended to any $t<\mathfrak t$ by setting
\begin{align*}
F(t)
&=E(t^+) + \frac{1}{2}\int_{\T}|\bfv|^2  \dx-2\int_ {\T}\bfu\cdot \bfv\dx
\end{align*}
recalling that $\bfu$ and $\bfv(\cdot\wedge \mathfrak t)$ belong to $C_w([0,T];L^2(\mt))$.
%Applying It\^{o}'s lemma it is straightforward to see that $\bfv$ satisfies an energy equality, that is we have $\mathbb P$-a.s.
%\begin{align*}
%\frac{1}{2}\int_{\T}|\bfv(t)|^2\dx=\frac{1}{2}\int_{\T}|\bfv(0)|^2\dx+\int_{\T}\int_0^t\bfv\cdot\Phi\,\dd W\dx+\frac{1}{2}\int_0^t\|\phi\|_{L_2(\UU,L^2(\T))}^2
%\end{align*}
%for all $t$.
Taking the expectation of $F(t\wedge \tau_L)$ and using (\ref{N3}) and \eqref{eq:energy=} yields
\begin{align*}
   \E &[F(t\wedge \tau_L )] \\&= \E [E((t\wedge \tau_L)^+)] +\frac{1}{2}\E \int_{\T}|\bfv(t\wedge \tau_L )|^2 \dd x - \E \int_{\T}\bfu(t\wedge \tau_L)\cdot\bfv(t\wedge \tau_L) \dd x \\
   &\leq \E \left( \int_{\T}|\bfv(0)|^2\dx+\int_0^{t\wedge \tau_L}\|\Phi\|_{L_2(\UU,L^2(\T))}^2 \,\dd \sigma\right)- \E \int_{\T}\bfu(t\wedge \tau_L )\cdot \bfv (t\wedge \tau_L )\,\dd x,
\end{align*}
where we also used $\bfu(0)=\bfv(0)$.
Re-writing the last term using Lemma \ref{lem}, we infer that
\begin{align*}
  \mathfrak A& := \int_{\T}\bfu(t\wedge \tau_L )\cdot\bfv(t\wedge \tau_L ) \dd x \\
  &= \int_{\T}\bfu(0)\cdot\bfv(0) \dd x +\int_0^{t\wedge \tau_L } \int_{\T} \langle \nu_{t,x},\bxi \otimes \bxi \rangle: \nabla \bfv \dxs\\& + \int_{(0,t\wedge \tau_L )\times\T}\langle \nu^{\infty}_{t,x},\bfxi \otimes \bfxi\rangle : \nabla \bfv\, \dd \lambda + \int_0^{t\wedge \tau_L}\int_{\T} (\bfv+\bfu)\cdot \Phi\dx\, \dd W 
   \\&+ \int_0^{t\wedge \tau_L }\int_{\T} \mathrm{div}(\bfv\otimes \bfv)\cdot \bfu \dxs +\int_0^{t\wedge \tau_L}\|\Phi\|_{L_2(\UU,L^2(\T))}^2 \,\dd t.
\end{align*}
The stochastic term in $\mathfrak A$ vanish upon computing expectations. Using also $\bfu(0)=\bfv(0)$ we obtain
\begin{eqnarray}
\E[F(t\wedge \tau_L)] &\leq -\E(\mathfrak A_I + \mathfrak A_{II}+\mathfrak A_{III})
         \label{ESt_a}
\end{eqnarray}
with the remaining terms
\begin{align*}
\mathfrak A_I&=\int_0^{t\wedge \tau_L} \int_{\T} \langle \nu_{t,x},\bxi \otimes \bxi \rangle: \nabla \bfv \dxs,\\
\mathfrak A_{II}&=\int_{(0,t\wedge \tau_L)\times\T}\langle \nu^{\infty}_{t,x},\bfxi \otimes \bfxi\rangle : \nabla \bfv\, \dd \lambda,\\
\mathfrak A_{III}&=\int_0^{t\wedge \tau_L} \int_{\T} \mathrm{div}(\bfv\otimes \bfv)\cdot \bfu \dxs.
\end{align*} 
Using standard identities for the nonlinear term we can write
\begin{equation}
    \mathfrak A_I+\mathfrak A_{III}=\int_0^{t\wedge \tau_L} \int_{\T} \langle \nu_{t,x},(\bxi-\bfv)  \otimes (\bxi-\bfv) \rangle: \nabla \bfv \dxs,
    \label{D_form}
\end{equation}
such that
\begin{align*}
\E[F(t\wedge \tau_L)]\leq& -\E\int_0^{t\wedge \tau_L} \int_{\T} \langle \nu_{t,x},(\bxi-\bfv)  \otimes (\bxi-\bfv) \rangle: \nabla \bfv \dxs\\&-\E\int_{(0,t\wedge \tau_L)\times\T}\langle \nu^{\infty}_{t,x},\bfxi \otimes \bfxi\rangle:\nabla \bfv\, \dd\lambda\\
\leq&\,\E\int_0^{t\wedge \tau_L} \int_{\T} \langle \nu_{t,x},|\bxi - \bfv|^2 \rangle\,|\nabla \bfv| \dxs+\E\int_0^{t\wedge \tau_L}\int_{\T}|\nabla \bfv|\, \dd\lambda_\sigma\, \dd \sigma 
\\\leq&\, \E\int_0^{t\wedge \tau_L} F(\sigma)\| \nabla \bfv \|_{L^\infty_x} \dd \sigma\leq\,L\,\E\int_0^{t\wedge \tau_L} F(\sigma)\ds
\label{ESt_b}
\end{align*}
by definition of $\tau_L$.
 Finally, Gronwall's lemma implies that $\E [F(t\wedge \tau_L)]=0$ for a.e. t as required.
 Using \eqref{eq:3003} we obtain $F(t\wedge \mathfrak t)=0$ $\mathbb P$-a.s. This finally yields the claim by definition of $F$.
\end{proof}

\begin{remark}\label{rem:new}
Suppose that $\mathfrak t=T$ is deterministic. As can be seen from the proof, in this case the conclusion  of Theorem \ref{thm:uniq} can be slightly strengthened to $\bfu = 
{\bf v}$ and $\mathcal V=(\delta_{\bfu},0,0)$ $\mathbb P$-a.s., that is 
\begin{align*}
&\p\Big(\Big\{\bfu(t,x)=\bfv(t,x)\,\,\text{for a.a. }(t,x)\in Q_T\Big\}\Big)=1,\\
&\p\Big(\Big\{(\nu_{t,x},\nu_{t,x}^\infty,\lambda)=(\delta_{\bfu(t,x)},0,0)\,\,\text{for a.a. }(t,x)\in Q_T\Big\}\Big)=1.
\end{align*}
\end{remark}

\subsection{Weak-strong uniqueness in law}

In this subsection we are finally concerned with the case that the dissipative solution and the strong solution are defined on distinct probability spaces. We obtain the following result.
\begin{theorem}\label{thm:uniqlaw}
The weak-strong uniqueness in law holds true for the stochastic Euler equations \eqref{eq:Euler} in the following sense: Let
$$\left[ (\Omega^1,\mathfrak F^1,(\mathfrak F^{1}_{t})_{t\geq0},\mathbb P^1),\bfu^1,\mathcal V^1 , W^1 \right]$$
be a dissipative martingale solution to \eqref{eq:Euler} in the sense of Definition \ref{def:soleuler} and let $\bfu^2$ be a strong solution of the same problem in the sense of Definition \ref{def:strsol} (with $\mathfrak t=T$) defined on a stochastic basis $(\Omega^2,\mathfrak F^2,(\mathfrak F^{2}_{t})_{t\geq0},\prst^2)$ with the Wiener process $W^2$. Suppose that
$$\prst^1\circ (\bfu^1(0))^{-1}=\prst^2\circ (\bfu^2(0))^{-1},$$
then
\begin{equation}\label{law}
\prst^1\circ(\bfu^1,\mathcal V^1)^{-1}=\prst^2\circ(\bfu^2,(\delta_{\bfu^2(t,x),0,0}))^{-1}.
\end{equation}
\end{theorem}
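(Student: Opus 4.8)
\emph{Overall plan.} I would run a Yamada--Watanabe type argument. Since the strong solution enjoys the pathwise weak--strong uniqueness of Theorem~\ref{thm:uniq} and $\mathfrak t=T$ is deterministic (so that Remark~\ref{rem:new} applies), it suffices to exhibit \emph{one} stochastic basis carrying a copy of the dissipative solution \emph{and} a copy of the strong solution, driven by the same cylindrical Wiener process and started from the same initial datum; the pathwise result then forces the two velocities to coincide and pins down $\mathcal V^1=(\delta_{\cdot},0,0)$, and pushing forward this almost-sure identity yields \eqref{law}.

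\emph{Gluing the laws.} Because $\bfu^i(0)$ is $\mathfrak F^i_0$-measurable, hence independent of the future increments of $W^i$, the law of $(\bfu^i(0),W^i)$ equals $\Lambda\otimes\mathcal L_W$ for $i=1,2$, where $\mathcal L_W$ is the law of a cylindrical Wiener process on $C_{\mathrm loc}([0,\infty);\mathfrak U_0)$. By Definition~\ref{def:soleuler}(f), $\mu^1:=\mathcal L[\bfu^1(0),\bfu^1,\mathcal V^1,W^1]$ is a Radon measure on $(\mathcal X,\mathscr B_{\mathcal X})$, hence concentrated on a $\sigma$-compact set; since compact subsets of the quasi-Polish space $\mathcal X$ are metrizable (they embed continuously and injectively into $[-1,1]^{\N}$ via the separating family \eqref{eq:quasi}), $\mu^1$ admits a regular disintegration $\{\mu^1_{(y,w)}\}$ with respect to the projection onto the $(\bfu_0,W)$-coordinates. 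The law $\mu^2:=\mathcal L[\bfu^2(0),\bfu^2,W^2]$ lives on a Polish space and disintegrates likewise as $\{\mu^2_{(y,w)}\}$. Setting
\[
\bar\mu=\int\big(\mu^1_{(y,w)}\otimes\mu^2_{(y,w)}\big)\,\dd(\Lambda\otimes\mathcal L_W)(y,w)
\]
on the product of $\mathcal X$ with the strong-solution path space, glued along the common coordinates $(\bfu_0,W)$, and letting $(\bar\Omega,\bar{\mathfrak F},\bar{\mathbb P})$ be its completion with canonical variables $(\bar\bfu_0,\bar\bfu^1,\bar{\mathcal V}^1,\bar\bfu^2,\bar W)$, we obtain $(\bar\bfu_0,\bar\bfu^1,\bar{\mathcal V}^1,\bar W)\sim^d(\bfu^1(0),\bfu^1,\mathcal V^1,W^1)$, $(\bar\bfu_0,\bar\bfu^2,\bar W)\sim^d(\bfu^2(0),\bfu^2,W^2)$, and $\bar\bfu^1(0)=\bar\bfu_0=\bar\bfu^2(0)$ $\bar{\mathbb P}$-a.s.

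\emph{Transferring the equations.} Equip $\bar\Omega$ with the $\bar{\mathbb P}$-augmented filtration
\[
\bar{\mathfrak F}_t=\sigma\Big(\sigma\big(\bar\bfu_0,\bfr_t\bar\bfu^1,\bfr_t\bar\bfu^2,\bfr_t\bar W\big)\cup\sigma_t[\bar{\mathcal V}^1]\cup\big\{\mathcal N\in\bar{\mathfrak F};\,\bar{\mathbb P}(\mathcal N)=0\big\}\Big).
\]
Since, conditionally on $(\bar\bfu_0,\bar W)$, the past of $(\bar\bfu^1,\bar{\mathcal V}^1)$ and the past of $\bar\bfu^2$ are independent, and each $W^i$ has increments independent of the corresponding original filtration, $\bar W$ is an $(\bar{\mathfrak F}_t)$-cylindrical Wiener process. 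Now I would repeat, essentially verbatim, the martingale-identification procedure of Section~\ref{subsec:new}: the defect in \eqref{eq:momentum}, and separately the defect in the equation of Definition~\ref{def:strsol}(b), is a square-integrable martingale whose quadratic variation and cross variations with the components of $\bar W$ agree with those of $\int_0^{\cdot}\int_{\mt}\bfvarphi\cdot\Phi\dx\,\dd\bar W$; invoking Proposition~\ref{RDT3} with the generalised Young measure regarded as a random distribution via \eqref{eq:2402b}, both momentum equations transfer to $(\bar\Omega,\bar{\mathfrak F},(\bar{\mathfrak F}_t),\bar{\mathbb P})$, and the energy inequality \eqref{N3} transfers in the same manner (cf.\ the passage culminating in \eqref{eq:1607'}). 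The remaining items in Definitions~\ref{def:soleuler} and \ref{def:strsol} only concern the fixed marginal laws and are automatically preserved. Hence $[(\bar\Omega,\bar{\mathfrak F},(\bar{\mathfrak F}_t),\bar{\mathbb P}),\bar\bfu^1,\bar{\mathcal V}^1,\bar W]$ is a dissipative martingale solution and $\bar\bfu^2$ (with $\mathfrak t=T$) is a strong solution of \eqref{eq:Euler} on this same basis, with the same Wiener process and the same initial datum.

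\emph{Conclusion, and the main obstacle.} Theorem~\ref{thm:uniq} together with Remark~\ref{rem:new} now gives $\bar\bfu^1=\bar\bfu^2$ and $\bar{\mathcal V}^1=(\delta_{\bar\bfu^2},0,0)$ $\bar{\mathbb P}$-a.s., whence
\[
\prst^1\circ(\bfu^1,\mathcal V^1)^{-1}=\bar{\mathbb P}\circ(\bar\bfu^1,\bar{\mathcal V}^1)^{-1}=\bar{\mathbb P}\circ\big(\bar\bfu^2,(\delta_{\bar\bfu^2},0,0)\big)^{-1}=\prst^2\circ\big(\bfu^2,(\delta_{\bfu^2(t,x)},0,0)\big)^{-1},
\]
which is \eqref{law}. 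I expect the delicate point to be the gluing and transfer steps: $Y_2^{\mathrm loc}(Q_\infty)$ is only quasi-Polish, so neither the disintegration nor the Yamada--Watanabe construction can be borrowed off the shelf, and it is precisely the Radon hypothesis in Definition~\ref{def:soleuler}(f) --- forcing the solution law onto a $\sigma$-compact, hence metrizable, set --- that makes the argument go through. A related subtlety is that $\bar{\mathcal V}^1$ is defined only for a.e.\ time, so adaptedness and the martingale-identification step must be carried out in the random-distribution formalism, through the history $\sigma_t[\bar{\mathcal V}^1]$ as in \eqref{eq:2402a}, rather than with classical stochastic processes.
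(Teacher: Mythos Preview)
Your proposal is correct and follows essentially the same Yamada--Watanabe route as the paper: glue the two solution laws along their common $(\bfu_0,W)$-marginal via regular conditional probabilities, transfer both the momentum equation and the energy inequality to the product space by the martingale-identification argument of Section~\ref{subsec:new} together with Proposition~\ref{RDT3}, and then invoke Theorem~\ref{thm:uniq} with Remark~\ref{rem:new}. The only cosmetic difference is in how the disintegration on the non-Polish path space is justified --- the paper argues that $\mathscr B_\Theta$ is countably generated and the law is Radon, then cites \cite{HoJo}, whereas you pass through $\sigma$-compactness and metrizability of compacts; both routes are valid and exploit the Radon hypothesis in Definition~\ref{def:soleuler}(f) in the same essential way.
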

\begin{proof}
Let us assume that
$$\left[ (\Omega^1,\mathfrak F^1,(\mathfrak F^{1}_{t})_{t\geq0},\mathbb P^1),\bfu^1,\mathcal V^1 , W^1 \right]$$
is a dissipative martingale solution to \eqref{eq:Euler} in the sense of Definition \ref{def:soleuler} and let $\bfu^2$ be a strong solution of the same problem in the sense of Definition \ref{def:strsol} (with $\mathfrak t=T$). Different to Theorem \ref{thm:uniq} $\bfu^2$ is now defined on a distinct stochastic basis $(\Omega^2,\mathfrak F^2,(\mathfrak F^{2}_{t})_{t\geq0},\prst^2)$ with a distinct Wiener process $W^2$.
We set $\bfv^j=\bfu^j-\bfu^j(0)$ for $t\geq0$ and $j=1,2$. We consider the topological space
\begin{align*} \mathcal X:= C([0,T];W^{-4,2}_{\Div}(\mt))\cap C_w([0,T];L^2_{\Div}(\mt))\times Y_2(Q,\R^3)\times C([0,T],\mathfrak U_0)
 \end{align*}
together with the $\sigma$-algebra $\mathscr B_{\mathcal X}$ as defined in \eqref{eq:BX}. Setting
\begin{align*}
\Theta=L^2(\mt)\times\mathcal X,\quad \mathscr B_{\Theta}=\mathscr B(L^2(\mt))\otimes\mathscr B_{\mathcal X}
\end{align*} 
 we denote the probability law $\mathcal L[\bfu^j(0),\bfv^j,\mathcal V^j,W^j]$ on $(\Theta,\mathscr B_{\Theta})$ by $\mu^j$ (recall that $\mathcal V^2=(\delta_{\bfu^2(t,x),0,0})$ for the strong solution). It
satisfies
\begin{align*}
\mu^j(\mathscr A)=\mathbb P_j\big([\bfu^j(0),\bfv^j,\mathcal V^j,W^j]\in\mathscr A\big),\quad \mathscr A\in \mathscr B_{\Theta}.
\end{align*}
The generic element of $\Theta$ is denoted by $\theta=(\tilde \bfu_0,\tilde W,\tilde\bfv,\tilde{\mathcal{V}})$. The marginal of each $\mathbb P_j$ on the $\tilde\bfu_0$-coordinate is $\Lambda$, the marginal on the $\tilde W$-coordinate is the Wiener measure $\mathbb P_*$ and the distribution of the pair is the product measure $\Lambda\otimes \mathbb P_*$ because $\bfu_0^j$ is $\mathfrak F^j_0$-measurable and $W^j$ is independent of $\mathfrak F^j_0$.
Moreover, under $\mathbb P_j$ the initial value of the $\tilde \bfv$-coordinate is zero a.s.\\
In a first step we are going to construct a product probability
space. In order to do this we need regular conditional probabilities and in the following we argue why this is possible in our situation.  
Let $(\mathcal O,\mathscr Y)$ be a measure space, where $\mathcal O$ is a Hausdorff topological space
and $\mathscr Y$ is countably generated. Let $\mathcal U$ be a  regular probability measure on
$(\mathcal O,\mathscr Y)$, i.e.
\begin{align*}
\mathcal U(A)=\sup\{\mathcal U(K):\,K\subset A\,\text{compact}\}\quad\forall A\in \mathscr Y.
\end{align*}
In other words $\mathcal U$ is Radon.
It is well-known that under these assumptions there is a regular conditional probability for $\mathcal U$, see e.g. \cite[introduction]{HoJo}. 
Since $\mathcal X$ is a quasi-Polish space and $L^2(\mt)$ is a Banach space it is clear that $\Theta$ is Hausdorff. We have to argue that $B_{\Theta}$ is countable generated. It is clear that $(\mathscr B(C([0,T];W^{-4,2}_{\Div}(\mt))$ and $\mathscr B(C([0,T],\mathfrak U_0))$ are countably generated since the spaces in question are both Polish.
As far as $\mathscr B_T(C_w([0,T];L^2_{\Div}(\mt))\big)$ is concerned we refer to \cite[Section 4]{bos} for a corresponding statement. Finally, since the function $f_n$ from \eqref{eq:quasi} range in the Polish space $[-1,1]$ and are continuous we have that $\sigma(f_n)$ is countably generated for each $n\in\N$. Since the family $\{f_n\}$ is countable we conclude that $\mathscr B_{Y}$ defined in \eqref{eq:sigmaY} is countably generated.
In conclusion there is a regular conditional probability
\begin{align*}
Q_j(\tilde\bfu_0,\tilde W,\mathscr A):L^2(\mt)\times C([0,T],\mathfrak U_0)\times \mathscr B_{\bfu}\otimes \mathscr B_{Y}\rightarrow[0,1]
\end{align*}
such that
\begin{enumerate}
\item[(i)] For each $(\tilde\bfu_0,\tilde W)\in L^2(\mt)\times C([0,T],\mathfrak U_0)$ we have that $$Q_j(\bfw,B,\cdot):(C([0,T];W^{-4,2}_{\Div}(\mt)\cap C_w([0,T];L^2(\mt));\mathscr B_{\bfu}\otimes \mathscr B_{Y})\rightarrow[0,1]$$
is a probability measure;
\item[(ii)] The mapping 
$(\tilde\bfu_0,\tilde W)\rightarrow Q_j(\tilde\bfu_0,\tilde W,\mathscr A)$ is $\mathscr B(L^2(\mt))\otimes \mathscr B\big(C([0,T],\mathfrak U_0)\big)$ measurable for each $\mathscr A\in\mathscr B_{\bfu}\otimes \mathscr B_{Y}$;
\item[(iii)] We have that
\begin{align*}%\label{eq:cond'}
\mathbb P_j(G\times \mathscr A)=&\int_GQ_j(\tilde\bfu_0,\tilde W,\mathscr A)\,\dd \mu(\tilde\bfu_0)\,\dd\mathbb P_*(\tilde W),\quad \mathscr A\in\mathscr B_{\bfu}\otimes \mathscr B_{Y},
\end{align*}
for all $G\in \mathscr B(L^2(\mt))\otimes \mathscr B\big(C([0,T],\mathfrak U_0)$.
\end{enumerate}
Finally we define
\begin{align*}
\tilde\Omega=\Theta\times C([0,T];W^{-4,2}_{\Div}(\mt))\cap C_w([0,T];L^2_{\Div}(\mt))\times Y_2(Q,\R^3)
\end{align*}
and $\tilde{\mathfrak F}$ is the completion of 
$\mathscr B_{\Theta}\otimes \mathscr B_{\bfu}\otimes \mathscr B_{Y}$
with respect to the probability measure
\begin{align*}
\tilde{\mathbb{P}}(G\times \mathscr A_1\times\mathscr A_2)=&\int_GQ_j(\tilde\bfu_0,\tilde W,\mathscr A_1)Q_j(\tilde\bfu_0,\tilde W,\mathscr A_2)\,\dd \mu(\tilde\bfu_0)\,\dd\mathbb P_*(\tilde W)
\end{align*}
for $\mathscr A_1,\mathscr A_2\in\mathscr B_{\bfu}\otimes \mathscr B_{Y}$ and $G\in \mathscr B(L^2(\mt))\otimes \mathscr B\big(C([0,T],\mathfrak U_0)$.
The space $(\tilde\Omega,\tilde{\mathfrak{F}},\tilde{\mathbb{P}})$ is the product probability space we were seeking and we obtain
\begin{align*}%\label{eq:2205}
\tilde{\p}\big(\big\{\tilde\omega\in\tilde\Omega:\,(\tilde\bfu_0,\Tilde W,\tilde\bfv^j,\tilde{\mathcal V}^j)\in\mathscr A\big\}\big)=\mu_j(\mathscr A),\quad \mathscr A\in\mathscr B_\Theta,\quad j=1,2.
\end{align*}
Finally, we define the filtrations
\begin{align*}
\tilde{\mathfrak F}^j_t&=\sigma\Big(\sigma \big(\tilde\bfu_0,\bfr_t\tilde{W},\bfr_t\tilde{\bfv}^j,\bfr_t\tilde{\mathcal V}^j\big)\cup\sigma_[\tilde{\mathcal V}^j]\cup\big\{\mathcal N\in\tilde{\mathfrak F};\;\tilde{\p}(\mathcal N)=0\big\}\Big),\; j=1,2,\\
\tilde{\mathfrak F}_t&=\sigma\Big(\sigma \big(\tilde\bfu_0,\bfr_t\tilde{W},\bfr_t\tilde{\bfv}^1,\bfr_t\tilde{\bfv}^2\big)\cup\sigma_t[\tilde{\mathcal V^1}]\cup\sigma_t[\tilde{\mathcal V}^2]\cup\big\{\mathcal N\in\tilde{\mathfrak{F}};\;\tilde{\p}(\mathcal N)=0\big\}\Big),
\end{align*}
which ensure the correct measurabilities. Here $\sigma_t$ denotes the history of a random distribution as defined in \eqref{eq:2402a}, where generalised Young measures are identified as random distribution in the sense of \eqref{eq:2402b}.\\
In the next step we aim to show that for $j=1,2$
$$\left[ (\Omega,\tilde{\mathfrak F},(\tilde{\mathfrak F}_{t})_{t\geq0},\tilde{\mathbb P}),\Tilde\bfv^j+\tilde\bfu_0,\tilde{\mathcal V}^j , \tilde W \right]$$
is a dissipative martingale solution to \eqref{eq:Euler} in the sense of Definition \ref{def:soleuler} and that $\bfv^2+\bfu_0$ is a strong solution. As in \eqref{eq:2011} we can prove
that $\tilde{\p}$-a.s.
\begin{align*}
\tilde\bfv^j(t,x)+\tilde\bfu_0(x)=\langle \tilde\nu^j_{t,x},\bfxi\rangle\quad\text{for a.a.}\quad (t,x)\in Q_T,
\end{align*}
where $\mathcal V^j=(\tilde\nu^j_{t,x},\tilde\nu_{t,x}^{\infty,j},\tilde\lambda^j)$.
Defining the functional
\begin{align*}
\mathfrak M(\bfw,\mathcal V)_t&=\int_{\mt}\bfw(t)\cdot\bfvarphi\dx
-\int_0^t\int_{\mt}\big\langle\nu_{t,x},\bfxi\otimes\bfxi\big\rangle:\nabla\bfvarphi\dx\,\dif s\\&-\int_{(0,t)\times\mt}\big\langle\nu_{t,x}^\infty,\bfxi\otimes\bfxi\big\rangle:\nabla\bfvarphi\,\dif \lambda,\quad\bfphi\in C^\infty_{\Div}(\mt),
\end{align*}
we can argue as in Section \ref{subsec:new} to prove that
$\mathfrak M(\tilde\bfu^j+\tilde\bfu_0,\mathcal V^j)$ is an $(\tilde{\mathfrak F}^j_t)$-martingale. Moreover its quadratic variation and cross variation with respect to $\tilde W$ are given by 
$\mathfrak N$ and $\mathfrak N^k$ respectively. Consequently, both solutions satisfy the momentum equation in the sense of Definition \ref{def:soleuler} (g) driven by $\tilde W$. Finally, we can use
again Proposition \ref{RDT3} to argue that the energy inequality continuous to hold
on the product probability space following the arguments of Section \ref{subsec:new}.\\
% the functionals
%\[
%\begin{split}
%\mathscr S^n(\mathcal V)_\tau&= \frac{1}{n}\big[E_\tau(\mathcal V)\big]^n  - \frac{2n-1}{2} \int_0^\tau [E_t(\mathcal V)]^{n-1} \|\Phi\|_{L_2(\mathfrak U,L^2(\mt))}^2
% \dt,\\ E(\mathcal V)_t&=\frac{1}{2}\int_ {\mt}\frac{1}{2}\big\langle\nu_{t,x},|\bfxi|^2\big\rangle\dx+\frac{1}{2}\lambda_t(\mt)
%\end{split}
%\]
%to prove that $\mathscr S^n(\tilde{\mathcal V}^j)_t$ is an a.s. $(\tilde{\mathfrak F}^j_t)$-supermartingale. \textcolor{blue}{It seems we need instead $\sigma(\tilde\bfu^j,\tilde{\mathcal V}^j(s),\,0\leq s\leq t)$}\\
In order to apply our pathwise weak-strong uniqueness result from Theorem \ref{thm:uniq} it suffices to argue that $\tilde{\bfu}^2=\tilde\bfv^2+\tilde\bfu_0$ is a strong solution. On the original probability space $(\Omega^2,\mathfrak F^2,\p^2)$ the strong solution $\bfu^2$ is supported on $C([0,T];C^1(\mt))$ and we have
$\mathcal V^2=(\delta_{\bfu^2(t,x)},0,0)$ $\p^2$-a.s. The embedding
\begin{align*}
C([0,T];C^1(\mt))\hookrightarrow C([0,T];W^{-4,2}(\mt))
\end{align*}
is continuous and dense such that 
\begin{align*}
C([0,T];C^1(\mt))\in \mathscr B(C([0,T];W^{-4,2}(\mt))\big)\subset \mathscr B_{\bfu},
\end{align*}
cf. \cite[Cor. A.2]{on2}. We conclude
\begin{align*}
\mu_2(C([0,T];C^1(\mt)))=\mathbb P_2\big(\bfv\in C([0,T];C^1(\mt))\big)=1
\end{align*} 
such that $\tilde\bfu^2$ is a strong solution on in the sense of Definition \ref{def:strsol} (with $\mathfrak t=T$) on
$(\tilde\Omega,\tilde{\mathfrak F},\tilde{\p})$. Moreover, we have $\tilde\bfu^2(0)=\tilde\bfu_0=\tilde\bfu^1(0)$ $\tilde{\p}$-a.s. We conclude by Theorem \ref{thm:uniq} (see also Remark \ref{rem:new})
that $\tilde{\p}$-a.s.
\begin{align*}
(\tilde\bfu^1,\tilde{\mathcal V}^1)=(\tilde\bfu^2,\tilde{\mathcal V}^2)=(\tilde\bfu^2,(\delta_{\tilde\bfu^2},0,0))
\end{align*}
Finally, we obtain
\begin{align*}
\mu_1(\mathscr A)&=\tilde{\p}\big(\big\{\tilde\omega\in\tilde\Omega:\,(\tilde\bfu_0,\tilde W,\tilde\bfv^1+\tilde\bfu_0,\tilde{\mathcal V}^1)\in\mathscr A\big\}\big)\\
&=\tilde{\p}\big(\big\{\tilde\omega\in\tilde\Omega:\,(\tilde\bfu_0,\tilde W,\tilde\bfv^2+\tilde\bfu_0,\tilde{\mathcal V}^2)\in\mathscr A\big\}\big)=\mu_2(\mathscr A)
\end{align*}
for all $\mathscr A\in \mathscr B_{\Theta}$ which finishes the proof.
\end{proof}

\subsection*{Acknowledgement}
The authors would like to thank M. Hofmanov\'a, R. Zhu and X. Zhu for stimulating discussions and suggestions which helped to improve the paper.\\
The second author is financed by the the Maxwell Institute Graduate School in Analysis \& its Applications in Edinburgh.

\subsection*{Conflict of Interest} The authors declare that they have no conflict of interest.

\end{document}